\documentclass[11pt,a4paper]{amsart}

\usepackage{amsthm}

\usepackage[utf8]{inputenc}
\usepackage{bbm,amssymb,mathtools,mathrsfs}

\usepackage[usenames,dvipsnames]{xcolor}

\usepackage[colorlinks,linkcolor=BrickRed,citecolor=OliveGreen,urlcolor=black,hypertexnames=true]{hyperref}

\usepackage[margin=3.5cm]{geometry}

\usepackage{enumerate}

\usepackage{bbding}
\usepackage{calligra}
\usepackage[T1]{fontenc}

\allowdisplaybreaks

\setcounter{tocdepth}{3}
\makeatletter
\def\l@subsection{\@tocline{2}{-4pt}{2.5pc}{5pc}{}}
\renewcommand\tocchapter[3]{%
  \indentlabel{\@ifnotempty{#2}{\ignorespaces#2.\quad}}#3%
}
\newcommand\@dotsep{4.5}
\def\@tocline#1#2#3#4#5#6#7{\relax
  \ifnum #1>\c@tocdepth 
  \else
    \par \addpenalty\@secpenalty\addvspace{#2}%
    \begingroup \hyphenpenalty\@M
    \@ifempty{#4}{%
      \@tempdima\csname r@tocindent\number#1\endcsname\relax
    }{%
      \@tempdima#4\relax
    }%
    \parindent\z@ \leftskip#3\relax \advance\leftskip\@tempdima\relax
    \rightskip\@pnumwidth plus1em \parfillskip-\@pnumwidth
    #5\leavevmode\hskip-\@tempdima{#6}\nobreak
    \leaders\hbox{$\m@th\mkern \@dotsep mu\hbox{.}\mkern \@dotsep mu$}\hfill
    \nobreak
    \hbox to\@pnumwidth{\@tocpagenum{#7}}\par
    \nobreak
    \endgroup
  \fi}
\makeatother
\AtBeginDocument{%
\makeatletter
\expandafter\renewcommand\csname r@tocindent0\endcsname{0pt}
\makeatother
}
\def\l@subsection{\@tocline{2}{0pt}{2.5pc}{5pc}{}}


\input xy
\xyoption{all}

\DeclareMathOperator{\Var}{Var}

\DeclareMathOperator{\id}{id}
\DeclareMathOperator{\Sym}{Sym}
\DeclareMathOperator{\Skew}{Skew}

\DeclareMathOperator{\tr}{tr}
\DeclareMathOperator{\sgn}{sgn}
\DeclareMathOperator{\Tr}{Tr}
\DeclareMathOperator{\Trd}{Trd}

\DeclareMathOperator{\End}{End}
\DeclareMathOperator{\Ker}{Ker}

\DeclareMathOperator{\diag}{diag}
\DeclareMathOperator{\sign}{sign}

\DeclareMathOperator{\Nil}{Nil}

\DeclareMathOperator{\Int}{Int}
\DeclareMathOperator{\rk}{rank}

\DeclareMathOperator*{\bigperp}{\raisebox{-.8ex}{\scalebox{2}{$\perp$}}}

\DeclareMathOperator*{\midperp}{\raisebox{-.7ex}{\scalebox{1.5}{$\perp$}}}

\DeclareMathOperator{\Supp}{Supp}

\newcommand{\N}{\mathbb{N}}
\newcommand{\Z}{\mathbb{Z}}

\newcommand{\CM}{\mathscr{M}}
\newcommand{\CN}{\mathscr{N}}

\newcommand{\Sper}{\operatorname{Sper}}
\newcommand{\Spec}{\operatorname{Spec}}

\newcommand{\fp}{\mathfrak{p}}

\newcommand{\s}{\sigma}

\newcommand{\ox}{\otimes}
\newcommand{\x}{\times}

\newcommand{\Qf}{\mathrm{qf}}

\newcommand{\ve}{\varepsilon}

\newcommand{\ovl}{\overline}

\newcommand{\qf}[1]{\langle #1\rangle}

\newcommand{\sm}{\setminus}

\newcommand{\vf}{\varphi}
\newcommand{\vt}{\vartheta}

\newcommand{\knp}{\bullet}

\newcommand{\io}[1]{\prescript{\iota}{}{#1}}

\newcommand{\ns}{\mathrm{ns}}

\newcommand{\op}{\mathrm{op}}

\newtheorem{thm}{Theorem}[section]

\newtheorem{lemma}[thm]{Lemma}

\newtheorem{cor}[thm]{Corollary}
\newtheorem{prop}[thm]{Proposition}

\theoremstyle{definition}
\newtheorem{defn}[thm]{Definition}
\newtheorem{exa}[thm]{Example}
\newtheorem*{exa*}{Example}

\theoremstyle{remark}
\newtheorem{rem}[thm]{Remark}

\numberwithin{equation}{section}

\definecolor{dgreen}{rgb}{0.1,0.5,0.1}
\definecolor{purple}{rgb}{0.49, 0.06, 0.51}
\definecolor{orange}{rgb}{1, 0.5, 0}


\begin{document}

\title[Continuity of total signature maps]{Continuity of total signature maps for Azumaya algebras with involution}

\author[V. Astier]{Vincent Astier}
\author[T. Unger]{Thomas Unger}
\address{School of Mathematics and Statistics, 
University College Dublin, Belfield, Dublin~4, Ireland}
\email{vincent.astier@ucd.ie}
\email{thomas.unger@ucd.ie}

\subjclass{16H05, 11E39, 13J30, 16W10, 06F25}
\date{\today}
\keywords{Azumaya algebras, involutions, hermitian forms, orderings, 
signatures, 
continuity, real algebra}


\begin{abstract}
  In this paper we continue our investigation of signatures of hermitian forms
  over Azumaya algebras with involution over commutative rings. We show that
  the approach used in an earlier paper for central simple algebras can be
  extended to Azumaya algebras and leads to a natural way of choosing the
  signature of a hermitian form at a given ordering, producing total signatures
  of hermitian forms that are continuous functions on the real spectrum of the
  base ring.
\end{abstract}


\maketitle


\parskip\smallskipamount
\tableofcontents
\parskip0pt


\section{Introduction}

In \cite{A-U-Az-PLG} we showed that signatures of hermitian forms can be
defined for Azumaya algebras with involution over a commutative ring $R$, and
used them to prove a version of Sylvester's inertia theorem and, in case $R$ is
semilocal, Pfister's local-global principle. The definition of the
signature at $\alpha \in \Sper R$ relies on the application of a Morita
equivalence which, when chosen differently, can change the sign of the result.
This is a consequence of a reduction to the central simple case, cf.
\cite{A-U-Kneb}. There does not seem to be a direct canonical way to choose
this Morita equivalence, which presents a problem if we consider the total
signature map of a hermitian form, and want to obtain a continuous function
defined on $\Sper R$. 

In this paper we show that the approach used in \cite{A-U-Kneb} for central
simple algebras with involution can be extended to Azumaya algebras with
involution: The sign of the signature can be chosen with the help of a
``reference form'' (Section~\ref{sec-eta-sign}), and this definition of
signature leads to total signatures of hermitian forms that are continuous
(Section~\ref{sec-cont}).

The results in this paper are very similar to those obtained for central simple
algebras with involution, but in many cases the existing proofs could not
simply be extended to Azumaya algebras. This required the introduction of new
tools, occasionally leading to simpler arguments. One such promising new tool
is obtained in Section~\ref{secstarmult} via a further investigation of a
pairing of hermitian forms, studied by Garrel in the central simple case, cf.
\cite{garrel-2023}.

\section{Preliminaries}

In this paper all rings are assumed unital and associative with $2$ invertible
and all fields are assumed to have characteristic different from $2$.
Let $R$ be a commutative ring. An
$R$-algebra $A$ is an \emph{Azumaya $R$-algebra} if $A$ is a faithful finitely
generated projective $R$-module and the map 
\begin{equation}\label{swm}
  A\ox_R  A^{\op}\to \End_R(A),\ a\ox b^\op\mapsto [x\mapsto axb]
\end{equation}
is an isomorphism of $R$-algebras (here $A^\op$ denotes the \emph{opposite
algebra} of $A$, which coincides with $A$ as an $R$-module, but with twisted
multiplication $a^\op b^\op=(ba)^\op$). The centre $Z(A)$ is equal to $R$. See \cite[III, (5.1)]{knus91}, for example. We recall the following results:

\begin{prop}[{\cite[pp.~11, 12]{Saltman99}}]\label{basic-Az}\mbox{}
  \begin{enumerate}[{\quad\rm (1)}]
    \item If $K$ is a field, then $A$ is an Azumaya $K$-algebra if and only
      if $A$ is a central simple $K$-algebra.
    \item If $A$ is an Azumaya $R$-algebra and $f: R \rightarrow S$ is a
    morphism of rings, then $A \ox_R S$ is an Azumaya $S$-algebra.
  \end{enumerate}
\end{prop}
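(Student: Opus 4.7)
The plan is to verify both parts directly from the definition of Azumaya algebra; both statements are classical and proceed by routine manipulations once the definition is unwound.

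For (1), I would observe first that a faithful finitely generated projective $K$-module is just a nonzero finite-dimensional $K$-vector space, so the substantive content lies entirely in whether the multiplication map $A \ox_K A^{\op} \to \End_K(A)$ of \eqref{swm} is an isomorphism. If $A$ is central simple over $K$, then both source and target are central simple $K$-algebras of dimension $(\dim_K A)^2$ (the source because the tensor product of a central simple algebra with its opposite is central simple, the target by Wedderburn), and the map sends $1 \ox 1^{\op}$ to $\id_A$, so it is a nonzero map between simple algebras of equal dimension, hence an isomorphism. Conversely, if the map is an isomorphism then $A$ is finite-dimensional, and any two-sided ideal $I \subseteq A$ gives a two-sided ideal $I \ox_K A^{\op}$ of $A \ox_K A^{\op} \cong \End_K(A)$, which is simple; this forces $I = 0$ or $I = A$. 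A parallel argument for centers shows $Z(A) \ox_K Z(A) \subseteq Z(A \ox_K A^{\op}) = K$, hence $Z(A) = K$.

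For (2), I would check the three Azumaya conditions after tensoring with $S$. Finite generation, projectivity, and faithfulness all survive arbitrary base change for $A$ finitely generated projective (faithfulness follows from $A$ being a generator over $R$, or by a local argument using that $A$ is locally free). The key step is the multiplication map. Tensoring \eqref{swm} with $S$ over $R$ yields an isomorphism
\[
(A \ox_R S) \ox_S (A \ox_R S)^{\op} \;\xrightarrow{\;\sim\;}\; \End_R(A) \ox_R S,
\]
where the left side has been rewritten using the canonical identification $(A \ox_R A^{\op}) \ox_R S \cong (A \ox_R S) \ox_S (A \ox_R S)^{\op}$. The canonical map $\End_R(A) \ox_R S \to \End_S(A \ox_R S)$ is an isomorphism precisely because $A$ is finitely generated projective over $R$. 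Composing these identifications and checking that the composite coincides with the multiplication map for $A \ox_R S$ over $S$ completes the argument.

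The main obstacle is not conceptual but bookkeeping: the identification $\End_R(A) \ox_R S \cong \End_S(A \ox_R S)$ relies on $A$ being finitely presented (automatic here), and the compatibility of the several canonical isomorphisms with the multiplication map needs to be tracked carefully. The absence of a deeper difficulty is consistent with the statement being quoted directly from Saltman.
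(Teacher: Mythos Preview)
Your argument is correct. The paper itself does not supply a proof of this proposition: it is quoted directly from Saltman's \emph{Lectures on division algebras} with only a page citation, so there is nothing to compare your approach against. Your verification of both parts from the definition is the standard one and matches what one finds in Saltman (or in Knus--Ojanguren); the only places requiring care are exactly those you flag, namely the identification $\End_R(A) \ox_R S \cong \End_S(A \ox_R S)$ for $A$ finitely generated projective, and the preservation of faithfulness under base change, both of which are routine.
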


\begin{defn}[{\cite[Section~1.4]{first23}}]\label{first-def}  
  We say that $(A,\s)$ is an \emph{Azumaya
  algebra with involution over $R$} if the following conditions hold:
  \begin{itemize}
    \item $A$ is an $R$-algebra with $R$-linear involution $\s$;
    \item $A$ is separable projective over $R$;
    \item the homomorphism $R \rightarrow A$, $r \mapsto r \cdot 1_A$ identifies
      $R$ with the set $\Sym(Z(A),\s)$ of $\s$-symmetric elements of $Z(A)$.
  \end{itemize}
\end{defn}

This definition is motivated by:

\begin{prop}[{\cite[Proposition~1.1]{first23}}]\label{prop:Az}
  Let $A$ be an $R$-algebra. Then
  $A$ is Azumaya over $Z(A)$ and $Z(A)$ is finite \'etale over $R$ if and only
  if $A$ is projective as an $R$-module and separable as an $R$-algebra.
\end{prop}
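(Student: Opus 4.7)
The plan is to prove the two implications separately, leveraging two classical facts: Azumaya algebras are separable over their centre (the isomorphism \eqref{swm} over the centre produces a separability idempotent), and both separability and finitely generated projectivity are transitive in towers of rings.

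For the forward direction, assume $A$ is Azumaya over $Z(A)$ and $Z(A)$ is finite \'etale over $R$. Then $A$ is finitely generated projective and separable over $Z(A)$, while $Z(A)$ is finitely generated projective and separable over $R$. Transitivity in the tower $R \subseteq Z(A) \subseteq A$ then yields immediately that $A$ is finitely generated projective as an $R$-module and separable as an $R$-algebra.

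For the reverse direction, assume $A$ is projective over $R$ and separable as an $R$-algebra. By the classical structure theory of separable algebras (Auslander--Goldman, as developed e.g.\ in DeMeyer--Ingraham or Knus--Ojanguren), the centre $Z(A)$ is separable over $R$ and $A$ is Azumaya over $Z(A)$: the separability idempotent of $A/R$ lives in $A \otimes_R A^{\op}$, and its image in the quotient $A \otimes_{Z(A)} A^{\op}$ is a separability idempotent witnessing that \eqref{swm} (applied over $Z(A)$) is an isomorphism.

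The main obstacle is to upgrade ``separable'' to ``finite \'etale'' for $Z(A)/R$, i.e.\ to prove that $Z(A)$ is finitely generated projective as an $R$-module; this does not follow from separability alone. The plan is to handle this by faithfully flat descent: once $A$ is known to be Azumaya over $Z(A)$, it is faithfully projective over $Z(A)$, and combined with the hypothesis that $A$ is finitely generated projective over $R$, this allows one to descend finite generation and projectivity from $A$ down to $Z(A)$. Alternatively, a localisation argument reduces to the case where $R$ is local and $A$ is free over $R$, where $Z(A)$ arises as the kernel of an explicit $R$-linear endomorphism of $A$ and its $R$-module structure can be analysed directly. In the write-up I would either rerun this descent argument or simply quote the detailed treatment from \cite{first23}.
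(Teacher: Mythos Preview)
The paper does not give a proof of this proposition at all: it is stated with the attribution \cite[Proposition~1.1]{first23} and used as a black box. Your outline is a reasonable sketch of the underlying argument, and your closing remark that one could ``simply quote the detailed treatment from \cite{first23}'' is exactly what the paper does.
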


\noindent\textbf{Assumption for the remainder of the paper:} $R$ is a commutative ring
(with $2$ invertible)
and $(A,\s)$ is an Azumaya
$R$-algebra with involution.
\bigskip
 
Note that $A$ is Azumaya over $Z(A)$ by Proposition~\ref{prop:Az}, 
but may not be  Azumaya over $R$.
Indeed, ``Azumaya algebra with involution'' means ``Azumaya 
algebra-with-involution'' rather than ``Azumaya-algebra with involution''.

In the special case of central simple algebras with involution, we recall that
if $(B,\tau)$ is such an algebra, then $\tau$ is said to be of the first kind
if $Z(B) \subseteq \Sym(B,\tau)$, and of the second kind (or of unitary type)
otherwise.  Involutions of the first kind are further divided into those of
orthogonal type and those of symplectic type, depending on the dimension of
$\Sym(B,\tau)$, cf.  \cite[Sections~2.A and 2.B]{BOI}.

A hermitian module over $(A,\s)$ is a pair $(M,h)$ where $M$
is a finitely generated projective right $A$-module and $h\colon M\x M \to A$ is
a hermitian form. We often refer to $(M,h)$ as a hermitian form and do not 
always indicate what $M$ is.
For $a_1,\ldots, a_\ell
\in \Sym(A,\s)$ we denote by $\qf{a_1,\ldots, a_\ell}_\s$
the diagonal  hermitian form 
\[
  A^\ell\x A^\ell \to A,\ (x,y)\mapsto \sum_{i=1}^\ell
  \s(x_i)a_i y_i.
\]
Note that not every hermitian form is isometric to a diagonal hermitian form. 
We denote isometry of forms by $\simeq$.

Let $\Trd_A\colon A\to Z(A)$ denote the reduced trace of $A$, cf.
\cite[IV, \S 2]{KO}, and recall that it is additive and $Z(A)$-linear.
Furthermore, $\Trd_A$ commutes with scalar extensions of $Z(A)$  since 
its computation does not depend on the choice of splitting ring, cf. 
\cite[IV, Proposition~2.1]{KO}. In fact, $\Trd_A$ also commutes with scalar 
extensions of $R$ (the case of interest to us):

\begin{lemma}[{\cite[Lemma~4.1]{A-U-Az-PLG}}]\label{tr-extend} 
  Let $R'$ be a commutative ring that contains $R$. Then for all $a\in A$,
  \[
    \Trd_A(a) \ox_R 1_{R'} = \Trd_{A \ox_R R'}(a \ox_R 1_{R'}).
  \]
\end{lemma}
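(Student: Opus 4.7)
The plan is to reduce the scalar extension of $R$ to a scalar extension of $C := Z(A)$, where the analogous compatibility of the reduced trace is already known, and then to translate the resulting equality back through the canonical identifications.

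First I would set $C' := C \ox_R R'$. By Proposition~\ref{prop:Az}, $C$ is finite \'etale over $R$ and $A$ is Azumaya over $C$. Since $A$ is canonically a $C$-algebra, the standard identifications
\[
  A \ox_R R' \;\cong\; (A \ox_C C) \ox_R R' \;\cong\; A \ox_C (C \ox_R R') \;=\; A \ox_C C'
\]
furnish a canonical $R'$-algebra isomorphism carrying $a \ox_R 1_{R'}$ to $a \ox_C 1_{C'}$. Applying Proposition~\ref{basic-Az}(2) with base ring $C$ then shows that $A \ox_C C'$ is Azumaya over $C'$; in particular the centre of $A \ox_R R'$ is $C'$, so $\Trd_{A \ox_R R'}$ corresponds, under the above isomorphism, to the reduced trace $\Trd_{A \ox_C C'}$ of the Azumaya $C'$-algebra $A \ox_C C'$.

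The key input is now the fact, already recalled in the excerpt and established in \cite[IV, Proposition~2.1]{KO}, that $\Trd_A$ commutes with scalar extensions of its centre $C$. Applied to the morphism $C \to C'$, this yields
\[
  \Trd_A(a) \ox_C 1_{C'} \;=\; \Trd_{A \ox_C C'}(a \ox_C 1_{C'}).
\]
The canonical isomorphism $C \ox_C C' \cong C' = C \ox_R R'$ then identifies $\Trd_A(a) \ox_C 1_{C'}$ with $\Trd_A(a) \ox_R 1_{R'}$, which, combined with the previous paragraph, gives the desired equality. I do not expect a substantive obstacle: the only real work is careful bookkeeping, checking that the canonical isomorphism $A \ox_R R' \cong A \ox_C C'$, the identifications of centres, and the two tensor expressions for $\Trd_A(a)$ are all mutually compatible. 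The entire argument is a bootstrap from the classical statement over the centre, and no new techniques are required.
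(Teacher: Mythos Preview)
Your argument is correct: reducing the $R$-scalar extension to a $Z(A)$-scalar extension via the canonical isomorphism $A \ox_R R' \cong A \ox_{Z(A)} (Z(A) \ox_R R')$ and then invoking \cite[IV, Proposition~2.1]{KO} is exactly the natural route, and your bookkeeping of the identifications is sound. Note that the present paper does not actually prove this lemma but merely quotes it from \cite[Lemma~4.1]{A-U-Az-PLG}; your proof is in the same spirit as what one would expect there, so there is nothing further to compare.
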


We define the trace quadratic form of $A$ as follows:
\[
  T_A: A \to Z(A),\ x\mapsto \Trd_A(x^2).
\]
The associated  symmetric bilinear form is $A\x A \to Z(A), (x,y)\mapsto \Trd_A(xy)$. We note that $T_A$ is nonsingular (the proof uses the same
arguments as the proof of \cite[Lemma~4.3]{A-U-Az-PLG}).
\medskip

We denote the real spectrum of $R$ by $\Sper R$ and by
\[
 \mathring H(r) := 
 \{\alpha \in \Sper R \mid r > 0 \text{ at } \alpha\} 
    = \{\alpha \in \Sper R \mid r \in \alpha \setminus -\alpha\}
\]
(for $r \in R$) the sets that form the 
standard subbasis of open sets of the Harrison topology on $\Sper R$.
We recall that there is a second, finer, topology on $\Sper R$, the constructible
topology, which has the finite boolean combinations of sets of the form
$\mathring H(r)$ (for $r \in R$) as a basis of open sets, cf. \cite[Definition~3.4.3]{KSU}. 
The space $\Sper R$ is compact (by which we
mean quasi-compact) for both topologies, cf. \cite[Theorem~3.4.5 and
Corollary~3.4.6]{KSU}.

Let $\alpha \in\Sper R$.
The support of $\alpha$ is the prime ideal $\Supp(\alpha) := \alpha \cap
-\alpha \in \Spec R$, and   
we denote
\begin{itemize}
  \item by $\rho_\alpha$ the canonical map
    $R \rightarrow \kappa(\alpha):=\Qf (R/\Supp(\alpha))$,
  \item by $\bar \alpha$ the ordering induced by $\alpha$ on
    the field of fractions $\kappa(\alpha)$,
  \item and by $k(\alpha)$ a real closure of $\kappa(\alpha)$ at $\bar\alpha$.
\end{itemize}

We also define
\[(A(\alpha), \sigma(\alpha)) := (A \ox_R \kappa(\alpha), \s \ox \id)\]
and
\[(A_\alpha, \sigma_\alpha) := (A \ox_R k(\alpha), \s \ox \id).\]
Clearly, $(A_\alpha, \s_\alpha) = (A(\alpha), \s(\alpha)) \ox_{\kappa(\alpha)} k(\alpha)$.

\begin{rem}\mbox{}\label{A-extend}
  \begin{enumerate}[{\quad\rm (1)}]
    \item Recall from \cite[Second paragraph of Section~1.4]{first23} 
    that if $T$ is a
      commutative $R$-algebra, then $(A \ox_R T, \s \ox \id)$ is an Azumaya
      algebra with involution over $T$ with $Z(A \ox_R T) = Z(A) \ox_R T$.
    \item If $R = F$ is a field, then $(A,\s)$ is a central simple $F$-algebra
      with involution in the sense of 
      \cite[Start of Sections~2.A and 2.B]{BOI}, i.e., $A$ is an $F$-algebra, 
      $\dim_F A$ is finite,
      $F$ is identified with $\Sym(A,\s)\cap Z(A)$ and either $A$ is simple 
      (in which case $Z(A)$ is
      a field) or is the product of two simple $F$-algebras 
      (in which case $Z(A) \cong F \x
      F$).
    \item  In particular, we obtain from the
      two previous points that $(A(\alpha), \s(\alpha))$ is a central simple
      $\kappa(\alpha)$-algebra with involution and that $(A_\alpha, \s_\alpha)$
      is a central simple $k(\alpha)$-algebra with involution, both in the sense
      of \cite{BOI}.

    \item Since $k(\alpha)$, $k(\alpha)(\sqrt{-1})$ and $(-1,-1)_{k(\alpha)}$
    are (up to isomorphism) the only finite-dimensional division
    $k(\alpha)$-algebras, it follows from the previous parts of this remark that
    $A_\alpha \cong
    M_{n_\alpha}(D_\alpha)$ for some $n_\alpha \in \N$, where $D_\alpha$ is one
    of
  \[k(\alpha),\ k(\alpha)(\sqrt{-1}),\ (-1,-1)_{k(\alpha)},
    k(\alpha) \x
        k(\alpha), (-1,-1)_{k(\alpha)} \x (-1,-1)_{k(\alpha)},
  \]
    where the final two cases can only occur if $\s_\alpha$ is of the second
    kind.
  \end{enumerate}

\end{rem}

\section{$\CM$-signatures}

We recall the following definition from \cite{A-U-Az-PLG}:

\begin{defn}\label{def:sig_h}
	Let $h$ be a hermitian form over $(A,\s)$ and let $\alpha\in \Sper R$. 
	Then $h\ox \kappa(\alpha)$ is a hermitian form over
	the central simple algebra with involution $(A(\alpha), \s(\alpha))$ 
  and we define
	the \emph{$\CM$-signature} of $h$ at $\alpha$ by
  \[\sign^{\CM}_\alpha h := \sign^{\CM_{\bar\alpha}}_{\bar \alpha} 
  (h \ox_R \kappa(\alpha)),\]
  where $\CM_{\bar\alpha}$ is a Morita equivalence as in 
  \cite[Section~3.2]{A-U-Kneb}; see
  \cite[Definition~3.1]{A-U-Az-PLG} and the discussion following it.
\end{defn}

\begin{rem}\label{signprop}
  The $\CM$-signature $\sign^{\CM}_\alpha$ inherits all the properties from the
  map $\sign^{\CM_{\bar\alpha}}_{\bar \alpha}$. For instance, it is additive
  and $\sign^\CM_\alpha (q\ox h)=\sign_\alpha q \cdot \sign^\CM_\alpha h $ if
  $q$ is quadratic over $R$ and $h$ hermitian over $(A,\s)$, cf.
  \cite[Proposition~3.6]{A-U-Kneb}.
\end{rem}

The main drawback of the $\CM$-signature at $\alpha \in \Sper R$ is that 
there is no canonical choice of Morita equivalence $\CM_{\bar\alpha}$, 
and that an arbitrary change in the sign of the signature of a form
can be obtained by taking a different Morita equivalence (cf. 
\cite[Proposition~3.4 and first paragraph of Section~3.3]{A-U-Kneb}).
This is in particular a problem
if we hope to consider the total signature of a form as a continuous 
function on $\Sper R$.
This problem is solved with the definition of $\eta$-signature in 
Section~\ref{sec-eta-sign}. 

\begin{rem}\label{choice-Morita}
  If there are several instances of $\CM$-signatures of forms over $(A,\s)$ in
  a statement or formula, we use the same choice of Morita equivalence for each
  of them.
\end{rem}

\begin{lemma}\label{sign-bounded}
  Let $(M,h)$ be a hermitian form over $(A,\s)$. There is $k \in \N$ depending only on $A$ and $M$, such that $|\sign^\CM_\alpha h| \le k$ for every $\alpha \in \Sper R$.
\end{lemma}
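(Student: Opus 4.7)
The strategy is to reduce to central simple algebras with involution over a real closed field via base change to $k(\alpha)$, and then use the standard fact that the absolute value of the signature of a hermitian form over a division algebra with involution is bounded by the rank of the form. Since $A$ is a finitely generated projective $R$-module and $M$ is a finitely generated projective right $A$-module, $M$ is itself a finitely generated projective $R$-module. In particular, the rank function $\fp \mapsto \rk_\fp M$ on $\Spec R$ is bounded, and I let $k$ denote its supremum; this depends only on $A$ and $M$.

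Now fix $\alpha \in \Sper R$. By Remark~\ref{A-extend}(3), $(A_\alpha,\s_\alpha)$ is a central simple algebra with involution over $k(\alpha)$ in the sense of \cite{BOI}, so by Definition~\ref{def:sig_h} the signature $\sign^{\CM}_\alpha h$ is computed by applying a Morita equivalence to $h\ox_R k(\alpha)$ to produce a hermitian form $h'_\alpha$ over a division algebra with involution $(D_\alpha,\tau_\alpha)$, where $D_\alpha$ is one of the five algebras listed in Remark~\ref{A-extend}(4). Writing $A_\alpha \cong M_{n_\alpha}(D_\alpha)$, Morita equivalence sends a module of $A_\alpha$-rank $m_\alpha$ to a $D_\alpha$-module of rank $m_\alpha n_\alpha$. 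Since for each of the division algebras in question the signature is bounded in absolute value by the rank of the form, we obtain $|\sign^{\CM}_\alpha h| \le m_\alpha n_\alpha$.

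To conclude, the elementary estimate
\[
m_\alpha n_\alpha \le m_\alpha n_\alpha^2 \dim_{k(\alpha)} D_\alpha = m_\alpha \dim_{k(\alpha)} A_\alpha = \dim_{k(\alpha)}(M\ox_R k(\alpha)) = \rk_{\Supp(\alpha)} M \le k
\]
gives $|\sign^{\CM}_\alpha h| \le k$. The point that needs the most care is the split unitary case, where $Z(A_\alpha) \cong k(\alpha)\x k(\alpha)$ and $D_\alpha$ is a product of two division algebras; there the Morita equivalence decomposes $h \ox_R k(\alpha)$ into two components and the argument must be applied to each, but no essential new difficulty arises because the same rank bound holds componentwise.
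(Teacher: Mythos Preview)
Your argument is correct and more direct than the paper's. One small imprecision: writing ``$A_\alpha$-rank $m_\alpha$'' and then $m_\alpha \dim_{k(\alpha)} A_\alpha = \dim_{k(\alpha)}(M\ox_R k(\alpha))$ presumes $M\ox_R k(\alpha)$ is free over $A_\alpha$, which need not hold. The fix is immediate: let $s_\alpha$ be the length of $M\ox_R k(\alpha)$ as an $A_\alpha$-module; Morita equivalence then produces a form on $D_\alpha^{s_\alpha}$, and $|\sign^\CM_\alpha h|\le s_\alpha\le s_\alpha n_\alpha\dim_{k(\alpha)}D_\alpha=\dim_{k(\alpha)}(M\ox_R k(\alpha))\le k$. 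For the split unitary case you can simply observe that $\alpha\in\Nil[A,\s]$ by Remark~\ref{cases-nil}, so the signature vanishes and no componentwise argument is needed.

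The paper's proof works over $\kappa(\alpha)$ rather than $k(\alpha)$: it fixes a finite generating set $S$ of $M$ over $A$, lets $N$ be the lcm of the degrees of $A$ at the primes of $R$, weakly diagonalizes $N\times h\ox_R\kappa(\alpha)$ via \cite[Lemma~2.2]{A-U-pos}, bounds the number $r$ of diagonal entries by $N\cdot|S|$ via a dimension count, and then invokes the one-dimensional signature bound from \cite[Proposition~4.4(iii)]{A-U-PS}. Your route bypasses diagonalization entirely by passing to the real closure and reading off the bound directly from the $R$-rank of $M$; this is cleaner and yields a somewhat sharper constant.
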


\begin{proof}
  Let $S$ be a finite set of generators of $M$ over $A$. Since $A$ is
  projective over $R$, the rank function on $A$ is continuous 
  (cf. \cite[p.~12]{Saltman99})
  and thus takes only
  finitely many values $n_1^2, \ldots, n_t^2$ (which are all squares). Let $N =
  \mathrm{lcm}(n_1, \ldots, n_t)$.
  The form $N \x h \ox_R \kappa(\alpha)$ is defined over $(A(\alpha),
  \s(\alpha))$, where $A(\alpha)$ has degree $n_i$ for some $1 \le i \le t$. By
  \cite[Lemma~2.2]{A-U-pos} (and since $n_i$, and thus $N$, are multiples of
  the matrix size of $A(\alpha)$), there exist $r\in\N$ and
  $a_1, \ldots, a_r \in \Sym(A(\alpha)^\x, \s(\alpha)) \cup\{0\}$ 
  such that $N \x h \ox_R \kappa(\alpha) \simeq \qf{a_1, \ldots,
  a_r}_{\s(\alpha)}$. By considering the underlying modules of both forms, it
  follows that $r \le \dim_{\kappa(\alpha)} (M^N \ox_R \kappa(\alpha)) \le N
  \cdot |S|$, and thus
  \begin{align*}
      |\sign^\CM_\alpha h| &\le N \cdot |\sign^\CM_\alpha h|\\
      &= N  \cdot |\sign^{\CM_{\bar \alpha}}_{\bar \alpha} h \ox 
      \kappa(\alpha)| \\
        &= |\sign^{\CM_{\bar \alpha}}_{\bar \alpha} 
        \qf{a_1, \ldots, a_r}_{\s(\alpha)}| \\
        &\le r\cdot n_{\bar \alpha}\\
        &\le N \cdot |S| \cdot n_{\bar \alpha},
    \end{align*}
  where the penultimate inequality comes from \cite[Proposition~4.4(iii)]{A-U-PS}. The
  result follows by the definition of $n_{\bar \alpha}$ (cf. 
  Remark~\ref{A-extend} or
  \cite[p.~352]{A-U-PS} since clearly $n_{\bar \alpha} \le n_i$ for some $i \in
  \{1,\ldots, t\}$).
\end{proof}

\subsection{Special case: quadratic \'etale
extensions of $R$}\label{quad-et}

Let $S$ be a 
quadratic \'etale $R$-algebra. We recall that $S$ has a unique
standard involution $\vt$, cf. \cite[I, (1.3)]{knus91}, and that 
$(S,\vt)$ is
an Azumaya algebra with involution over $R$ by \cite[Lemma~2.5]{A-U-Az-PLG}.

We refer to \cite[Section~2.2]{A-U-Az-PLG} for the following observations:
The trace $\Tr_{S/R}$ satisfies $\Tr_{S/R}(x)=\vt(x)+x$ for all
$x\in S$. Let $h$ be a hermitian form over $(S,\vt)$. Then $\Tr_{S/R}(h)$
is a symmetric bilinear form over $R$ with associated quadratic form
$q_h(x)=\Tr_{S/R}(h(x,x))=2h(x,x)$. If $h$ is nonsingular or hyperbolic, then
$\Tr_{S/R}(h)$ is also nonsingular or hyperbolic, respectively. 
\medskip

Let $\alpha\in \Sper R$. If $S\ox_R \kappa(\alpha)\cong \kappa(\alpha)\x \kappa(\alpha)$, then $\vt\ox\id_R$ is the exchange involution $(x,y)\mapsto (y,x)$, and we let
\begin{equation}\label{signcentre1}
  \sign_\alpha h:= 0
\end{equation}
since (the nonsingular part of) $h\ox_R \kappa(\alpha)$ is hyperbolic, cf.  
\cite[Proof of Lemma~2.1(iv)]{A-U-Kneb}.

Thus we may assume that $S\ox_R \kappa(\alpha)$ is a quadratic field extension 
of $\kappa(\alpha)$, and so  we can use the second case of 
\cite[Equation~(2.2)]{A-U-sign-pc}, noting that $q_h$ and the
quadratic form $b_h$ from \cite[Section~2.1]{A-U-sign-pc} differ by a factor 
$2$, so that they have the same signature.
Since $\Tr_{S/R}$ commutes with scalar extension (cf. 
\cite[Remark~2.9]{A-U-Az-PLG}), we thus define the signature of $h$ at $\alpha$
in this case by
\begin{equation}\label{signcentre2}
  \sign_\alpha h:= \tfrac{1}{2} \sign_\alpha q_h
  =\tfrac{1}{2} \sign_{\bar\alpha} (q_h\ox \kappa(\alpha))
\end{equation}
in view of \cite[Equation~(2.2)]{A-U-sign-pc}.

Note that as a consequence of \cite[Remark~2.5]{A-U-sign-pc}, the definition
of $\sign_\alpha h$ in \eqref{signcentre2} is a special case of 
$\sign^\CM_\alpha h$ for a suitable choice of hermitian Morita equivalence.

\subsection{The set of nil-orderings is clopen}

\begin{defn}\label{def-Nil}
  We
  define the \emph{set of nil-orderings} of $(A,\s)$ to be
  \begin{align*}
    \Nil[A,\s] &:=\{\alpha \in \Sper(R) \mid \sign_\alpha^{\CM} = 0\} \\
        &\phantom{:}= \{\alpha \in \Sper(R) \mid \sign^{\CM_{\bar\alpha}}_{\bar \alpha} = 0\} \text{ by \cite[Lemma~3.7]{A-U-Az-PLG}}.
  \end{align*}
\end{defn}

As observed in \cite[Remark~3.6]{A-U-Az-PLG}, this definition is equivalent
to \cite[Definition~3.7]{A-U-Kneb} in case $(A,\s)$ is a central simple
algebra with involution, which allows us to make the following observation:

\begin{rem}\label{cases-nil}
  We use the notation from Remark~\ref{A-extend}(4).
  For $\alpha \in \Sper R$, we have that $\alpha \in \Nil[A,\s]$ if and only if
\[D_\alpha =
\begin{cases}
  (-1,-1)_{k(\alpha)} & \text{ if $\sigma_\alpha$ is orthogonal,} \\
  k(\alpha) & \text{ if $\s_\alpha$ is symplectic,} \\
  k(\alpha) \x k(\alpha) \text{ or } (-1,-1)_{k(\alpha)} \x (-1,-1)_{k(\alpha)} & \text{ if $\s_\alpha$ is unitary.}
\end{cases}\]
\end{rem}

\begin{lemma}\mbox{}\label{trd}
  Let $F$ be a field (of characteristic not $2$).
    \begin{enumerate}[{\quad\rm (1)}]
    \item For $a,b\in F^\x$  let 
      $(a,b)_F$ denote the quaternion $F$-algebra with generators $i,j$
      such that $i^2=a, j^2=b, ij=-ji=:k$. 
      If $A=(a,b)_F$ with $a,b\in F^\x$, then
    \[
      T_A\simeq \qf{2,2a,2b,-2ab}.
    \]
    \item If $A=M_n(F)$ with $n\in \N$, then
    \[
      T_A\simeq n\x \qf{1}\perp \frac{n(n-1)}{2}\x \qf{-1,1}.
    \]
  \end{enumerate}
\end{lemma}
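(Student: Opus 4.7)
The plan is to prove both parts by a direct basis computation, using that the reduced trace of a quaternion algebra is the ``twice the scalar part'' map and that the reduced trace of $M_n(F)$ is the ordinary matrix trace.

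For part (1), I would work in the standard basis $\{1,i,j,k\}$ of $A=(a,b)_F$. The reduced trace satisfies $\Trd_A(x) = x+\bar x = 2x_0$ where $x = x_0 + x_1 i + x_2 j + x_3 k$ and $\bar x = x_0 - x_1 i - x_2 j - x_3 k$ is the canonical involution. Expanding $x^2$, the cross terms $ij+ji$, $ik+ki$, $jk+kj$ all vanish, so only the scalar part of $x^2$ contributes to the reduced trace:
\[
  x^2 = (x_0^2 + a x_1^2 + b x_2^2 - ab x_3^2) + 2x_0(x_1 i + x_2 j + x_3 k),
\]
using $i^2=a$, $j^2=b$, $k^2=-ab$. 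Therefore
\[
  T_A(x) = \Trd_A(x^2) = 2x_0^2 + 2a x_1^2 + 2b x_2^2 - 2ab x_3^2,
\]
which in the basis $\{1,i,j,k\}$ is the diagonal form $\qf{2,2a,2b,-2ab}$. Since this basis diagonalizes $T_A$ (the associated bilinear form $(x,y)\mapsto \Trd_A(xy)$ is zero on mixed basis pairs by the same cross-term cancellation), this gives the claimed isometry.

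For part (2), I would use the matrix units $\{e_{ij}\}_{1\le i,j\le n}$ of $A=M_n(F)$ and the standard identity $e_{ij}e_{kl} = \delta_{jk}e_{il}$. Then
\[
  \Trd_A(e_{ij}e_{kl}) = \tr(\delta_{jk} e_{il}) = \delta_{jk}\delta_{il}.
\]
So the bilinear form $(X,Y)\mapsto \tr(XY)$ decomposes the basis into the $n$ diagonal units $e_{ii}$ with $\tr(e_{ii}^2)=1$, and the $\binom{n}{2}$ pairs $\{e_{ij},e_{ji}\}$ for $i<j$, each of which spans a subspace on which the form has Gram matrix $\bigl(\begin{smallmatrix}0 & 1\\ 1 & 0\end{smallmatrix}\bigr)$, i.e.\ a hyperbolic plane $\qf{-1,1}$. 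These subspaces are mutually orthogonal by the same formula, so
\[
  T_A \simeq n\times \qf{1} \perp \tfrac{n(n-1)}{2}\times \qf{-1,1}.
\]

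There is no real obstacle here beyond careful bookkeeping; the only mild subtlety is remembering that the reduced trace of a quaternion algebra equals twice the scalar part (not the scalar part itself), which is what produces the factor of $2$ in every entry of the form in part (1).
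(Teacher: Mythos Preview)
Your proof is correct and follows essentially the same approach as the paper: both parts are handled by a direct computation in the standard bases $\{1,i,j,k\}$ and $\{e_{ij}\}$, identifying the orthogonal pieces just as you do. Your write-up is simply a more detailed version of the paper's terse proof.
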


\begin{proof} 
  The proof of these well-known facts is by straightforward computation, see 
  \cite{Lewis94} for example. 
  
  (1) Let $\alpha = x_0 +x_1i +x_2j +x_3k \in A$. Then $\Trd_A(\alpha)=2x_0$.
  The $F$-basis $\{1,i,j,k\}$ of $A$ is orthogonal for $T_A$ and we get
  $T_A\simeq \qf{2,2a,2b,-2ab}$.
  
  (2) Let $\alpha=\sum_{i,j=1}^n \alpha_{ij}e_{ij}\in A$, where 
  $\{e_{ij}\}_{i,j=1,\ldots,n}$ is the standard matrix basis of $A$. Then
  $\Trd_A(\alpha)=\tr(\alpha)=\sum_{i=1}^n \alpha_{ii}$ and  
  $T_A\simeq n\x  \qf{1}\perp \frac{n(n-1)}{2}\x \qf{-1,1}$, where 
  $n\x  \qf{1}$ comes from $e_{11},\ldots, e_{nn}$, while the pairs 
  $\{e_{ij}, e_{ji}\}_{i\not=j}$ contribute the $\frac{n(n-1)}{2}$ hyperbolic
  planes.
\end{proof}

\begin{lemma}\label{prop:rc}
  Let $F$ be a real closed field, let $H=(-1,-1)_F$ and let $n\in \N$.
  \begin{enumerate}[{\quad\rm (1)}]
    \item If $A=H$, then $\sign T_A=-2$.
    \item If $A=M_n(F)$, then $\sign T_A=n$.
    \item If $A=M_{n}(H)$, then $\sign T_A=-2n$
    \item If $A=M_n(F)\x M_n(F)$, then $\sign T_A=2n$.
    \item If $A=M_{n}(H)\x M_{n}(H)$, 
    then $\sign T_A= -4n$.
  \end{enumerate}
\end{lemma}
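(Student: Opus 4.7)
The plan is to dispatch each case by combining Lemma~\ref{trd} with two structural properties of the reduced trace form: its multiplicativity under tensor products of central simple algebras, and its additivity under direct products.

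Parts (1) and (2) are immediate applications of Lemma~\ref{trd}. Specialising Lemma~\ref{trd}(1) at $a=b=-1$ yields $T_H \simeq \qf{2,-2,-2,-2}$, which has signature $1-3=-2$ over the real closed $F$; this handles (1). For (2), Lemma~\ref{trd}(2) gives $T_{M_n(F)} \simeq n\x\qf{1} \perp \tfrac{n(n-1)}{2}\x\qf{-1,1}$, and the second summand consists of hyperbolic planes contributing nothing to the signature, so $\sign T_{M_n(F)} = n$.

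For (3), I would exploit the $F$-algebra isomorphism $M_n(H) \cong M_n(F) \ox_F H$. The key input is that for central simple $F$-algebras $B$ and $C$, the reduced trace satisfies $\Trd_{B\ox C}(b\ox c) = \Trd_B(b)\Trd_C(c)$, whence the trace bilinear (and thus quadratic) forms satisfy $T_{B\ox C} \simeq T_B \ox T_C$. Since signature is multiplicative under tensor products of quadratic forms over a real closed field, combining with the computations of (1) and (2) yields $\sign T_{M_n(H)} = n\cdot(-2) = -2n$.

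For (4) and (5), I would use that when $A = A_1 \x A_2$ has centre $F \x F$, the reduced trace acts componentwise, so the quadratic form $T_A$ decomposes as $T_{A_1} \perp T_{A_2}$ (viewing $T_A$ as an $F$-valued form via the \'etale trace $Z(A)\to F$, $(x,y)\mapsto x+y$, or equivalently by summing signatures over the two components of $Z(A)$). Applying (2) and (3) then yields $\sign T_{M_n(F)\x M_n(F)} = 2n$ and $\sign T_{M_n(H)\x M_n(H)} = -4n$.

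The step I expect to be the main (though still mild) obstacle is the tensor product compatibility $T_{B\ox C} \simeq T_B \ox T_C$ used in (3). My approach would be to verify it by passing to a common splitting field $K$ of $B$ and $C$, where the reduced traces become ordinary matrix traces and the identity reduces to the Kronecker product formula $\tr(M\ox N) = \tr(M)\tr(N)$; the compatibility of reduced traces with scalar extension given by Lemma~\ref{tr-extend} ensures this suffices to conclude over $F$.
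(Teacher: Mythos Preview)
Your proposal is correct and follows essentially the same route as the paper: both use Lemma~\ref{trd} for parts (1) and (2), the tensor identity $T_{B\ox C}\simeq T_B\ox T_C$ (which the paper cites from \cite{Lewis94}) for (3), and the orthogonal decomposition $T_{A_1\x A_2}\simeq T_{A_1}\perp T_{A_2}$ (which the paper cites from \cite{Reiner2003}) for (4) and (5). Your sketch of how to verify the tensor identity via a splitting field is a reasonable substitute for the paper's citation.
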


\begin{proof}
  This is a straightforward application of Lemma~\ref{trd} and the fact that if
  $A$ and $B$ are central simple $F$-algebras, then
    \[T_{A\ox_F B} \simeq T_A \ox T_B \quad\text{and}\quad T_{A\x B}\simeq T_A 
      \perp T_B,
    \]
  cf. \cite[Lemma~2.1]{Lewis94} for the first equality, while the second
  equality follows from \cite[p.~121]{Reiner2003}.
\end{proof}

We can now present an elementary proof of the 
following result that already appeared as \cite[Corollary~6.5]{A-U-Kneb}:

\begin{prop}\label{csa-nil}
  Let $F$ be a field (of characteristic not $2$) and
  let
  $(B,\tau)$ be a central simple $F$-algebra 
  with involution. Then $\Nil[B,\tau]$ is a clopen subset of $X_F$.  
\end{prop}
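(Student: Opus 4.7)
The plan is to split into cases according to the type of $\tau$ and to exhibit $\Nil[B,\tau]$ in each case as the preimage of an open set under a continuous map on $X_F$, using the structural dichotomy in Remark~\ref{A-extend}(4) together with the trace-form computations in Lemmas~\ref{trd} and~\ref{prop:rc}.

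First I would treat the case where $\tau$ is of the first kind, so $Z(B) = F$, and set $m := \deg B$. Then $T_B$ is an $F$-quadratic form, and Lemma~\ref{tr-extend} (with $R = F$ and $R' = k(\alpha)$) yields $T_B \otimes_F k(\alpha) \simeq T_{B_\alpha}$ for each $\alpha \in X_F$. By Remark~\ref{A-extend}(4), $B_\alpha$ is isomorphic either to $M_m(k(\alpha))$ or to $M_{m/2}((-1,-1)_{k(\alpha)})$, and Lemma~\ref{prop:rc}(2),(3) give $\sign_\alpha T_B = m$ or $-m$ respectively. Combined with Remark~\ref{cases-nil} this identifies $\Nil[B,\tau]$ with $\{\alpha \in X_F : \sign_\alpha T_B < 0\}$ when $\tau$ is orthogonal and with $\{\alpha \in X_F : \sign_\alpha T_B > 0\}$ when $\tau$ is symplectic; since the total signature of an $F$-quadratic form is a $\Z$-valued, Harrison-continuous function on $X_F$ taking only finitely many values, both of these sets are clopen.

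For the unitary case, let $K := Z(B)$, a quadratic \'etale $F$-algebra. For $\alpha \in X_F$, either $K \otimes_F k(\alpha) \cong k(\alpha) \times k(\alpha)$ or, since $k(\alpha)$ is real closed, $K \otimes_F k(\alpha) \cong k(\alpha)(\sqrt{-1})$. In the first subcase $B_\alpha$ is a product of two central simple $k(\alpha)$-algebras, so $D_\alpha$ is one of the two ``product'' entries of Remark~\ref{A-extend}(4) and $\alpha \in \Nil[B,\tau]$ by Remark~\ref{cases-nil}; in the second subcase $B_\alpha$ is central simple with centre $k(\alpha)(\sqrt{-1})$, so $D_\alpha = k(\alpha)(\sqrt{-1})$ and $\alpha \notin \Nil[B,\tau]$. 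Hence $\Nil[B,\tau]$ is exactly the set of $\alpha$ at which $K$ splits; this is clopen because either $K \cong F \times F$ and the set equals $X_F$, or $K \cong F[\sqrt{d}]$ for a non-square $d \in F^\times$ and the set equals $\mathring H(d)$, with open complement $\mathring H(-d)$.

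The main obstacle I anticipate is precisely this unitary subcase in which $K \otimes_F k(\alpha)$ remains a field: there $T_B$ is not an $F$-valued form and Lemma~\ref{prop:rc} does not apply directly. I would sidestep this by avoiding $T_B$ in the unitary case and characterising $\Nil[B,\tau]$ purely through the splitting of $K$, as above; alternatively one could replace $T_B$ by the $F$-trace form of $K$, whose signature at $\alpha$ is $2$ when $K$ splits and $0$ otherwise, producing the same clopen description.
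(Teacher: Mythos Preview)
Your proof is correct. In the first-kind case you argue exactly as the paper does, via the sign of $\sign_\bullet T_B$. In the unitary case you take a genuinely different route: the paper again invokes the trace form of $B$, writing $\Nil[B,\tau] = (\sign_\bullet T_B)^{-1}(2n) \,\dot\cup\, (\sign_\bullet T_B)^{-1}(-2n)$ via Lemma~\ref{prop:rc}(4),(5), whereas you bypass $T_B$ entirely and identify $\Nil[B,\tau]$ with the set of orderings at which $K=Z(B)$ splits, which is visibly the Harrison-clopen set $\mathring H(d)$ (or $X_F$). Your approach is more elementary and sidesteps the need to interpret $T_B$ as an $F$-form when $Z(B)\neq F$; the paper's approach has the advantage of treating all types uniformly through the single invariant $T_B$ and of distinguishing the two split subcases $D_\alpha=k(\alpha)\times k(\alpha)$ versus $D_\alpha=(-1,-1)_{k(\alpha)}\times(-1,-1)_{k(\alpha)}$, information you do not need here but which feeds into Lemma~\ref{clopen-sets}.
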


\begin{proof}
  We use Lemmas~\ref{tr-extend} and 
  \ref{prop:rc}, and the fact that the total signature map of a
  quadratic form is continuous, cf. \cite[VIII, Proposition~6.6]{LQF2}.
  
  a) Assume that $\s$ is of the first kind and that $\dim_F B=n^2$.
  If $\s$ is symplectic, then
  \[
    \Nil[B,\tau]=\{P\in X_F \mid B\ox_F F_P \cong M_n(F_P)\} = (\sign_\bullet 
    T_B)^{-1}(n).
  \]
  If $\s$ is orthogonal, then
  \[
    \Nil[B,\tau]=\{P\in X_F \mid B\ox_F F_P \cong 
    M_{n/2}\bigl((-1,-1)_{F_P}\bigr)\} = (\sign_\bullet 
    T_B)^{-1}(-n).
  \]
  
  b) Assume that $\s$ is of the second kind and let $K$ denote the centre 
  of $B$. 
  If $K\cong F\x F$, then 
  $\Nil[B,\tau]=X_F$ by \cite[Definition~3.7]{A-U-Kneb}, see the comment after
  Definition~\ref{def-Nil}.
  Assume thus that $K$ is a field and let $\dim_K B=n^2$.
  Consider the  clopen sets $S_1$ and $S_2$, defined as follows:
  \begin{align*}
    S_1&:= \{P\in X_F \mid B\ox_F F_P \cong M_n(F_P)\x M_n(F_P)\} = 
    (\sign_\bullet T_B)^{-1}(2n)\\
    S_2&:= \{P\in X_F \mid B\ox_F F_P \cong M_{n/2}\bigl((-1,-1)_{F_P}\bigr)\x 
    M_{n/2}\bigl((-1,-1)_{F_P}\bigr)\}\\ 
    &\phantom{:}=  (\sign_\bullet T_B)^{-1}(-2n).
  \end{align*}
  The statement follows since $\Nil[B,\tau]=S_1 \dot\cup S_2$.  
\end{proof}

\begin{lemma}\label{clopen-sets}
  The 
  following
  subsets of $\Sper R$ are clopen for the Harrison topology:
  \begin{align*}
    C_1&:= \{\alpha \in \Sper R \mid \s_\alpha \text{ is orthogonal}\},\\
    C_2&:= \{\alpha \in \Sper R \mid \s_\alpha \text{ is symplectic}\},\\
    C_3&:= \{\alpha \in \Sper R \mid \s_\alpha \text{ is unitary}\},\\
    C_4&:= \{\alpha \in \Sper R \mid A \ox_R k(\alpha) \cong 
    M_{n}(k(\alpha))\},\\
    C_5&:= \{\alpha \in \Sper R \mid A \ox_R k(\alpha) \cong 
    M_{n}(k(\alpha)(\sqrt{-1}))\},\\
    C_6&:= \{\alpha \in \Sper R \mid A \ox_R k(\alpha) \cong
     M_{n/2}((-1,-1)_{k(\alpha)})\},\\
    C_7&:= \{\alpha \in \Sper R \mid A \ox_R k(\alpha) \cong
     M_{n}(k(\alpha)) \x M_{n}(k(\alpha))\},\\
    C_8&:= \{\alpha \in \Sper R \mid A \ox_R k(\alpha) \cong
     M_{n/2}((-1,-1)_{k(\alpha)}) \x 
     M_{n/2}((-1,-1)_{k(\alpha)})\}.
  \end{align*}
\end{lemma}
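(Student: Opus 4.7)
The plan is to distinguish the eight cases using two kinds of continuous invariants on $\Sper R$: the ranks of certain projective $R$-modules attached to $(A,\s)$, and the total signature of a natural $R$-valued trace form. Since $(A,\s)$ is an Azumaya algebra with involution, $A$, $Z(A)$ and $\Sym(A,\s)$ are finitely generated projective $R$-modules (the first two by hypothesis; the third as a direct summand of $A$ via the projector $\frac{1+\s}{2}$), so their $R$-ranks are locally constant functions on $\Spec R$ (cf.~\cite[p.~12]{Saltman99}) and, via the continuous support map $\Supp$, also locally constant on $\Sper R$. Moreover, the condition $\Sym(Z(A),\s)=R$ from Definition~\ref{first-def} forces $\rk_R Z(A)\in\{1,2\}$. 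I would first decompose $\Sper R$ into finitely many clopen pieces on each of which the three ranks $d:=\rk_R A$, $m:=\rk_R Z(A)$ and $s:=\rk_R \Sym(A,\s)$ are constant, and prove the statement on each piece separately. Writing $d=mn^2$, the integer $n$ is then the common degree of $A_\alpha$ over $Z(A_\alpha)$ for all $\alpha$ in the piece.

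For $C_1,C_2,C_3$: since taking symmetric parts commutes with scalar extension, $\Sym(Z(A_\alpha),\s_\alpha)=k(\alpha)$ for every $\alpha$. On a piece with $m=1$ this forces $Z(A_\alpha)=k(\alpha)$ and $\s_\alpha$ is of the first kind; its type is then detected by $s$, being orthogonal when $s=n(n+1)/2$ and symplectic when $s=n(n-1)/2$. On a piece with $m=2$, $\s_\alpha$ restricts nontrivially to $Z(A_\alpha)$ and is of the second kind. Hence each piece is entirely contained in exactly one of $C_1,C_2,C_3$, and these three sets are clopen as finite unions of clopen pieces.

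For $C_4,\dots,C_8$: I would introduce the $R$-valued quadratic form
\[
T_A^R:A\to R,\quad T_A^R(x):=\Tr_{Z(A)/R}\bigl(\Trd_A(x^2)\bigr).
\]
This form is nonsingular (it coincides up to the factor $n$ with the separability trace form $x\mapsto\Tr_{A/R}(x^2)$), and its total signature $\sign_\bullet T_A^R:\Sper R\to\Z$ is continuous by the standard continuity of total signatures of quadratic forms over $R$ (reduced locally to the field case of \cite[VIII, Proposition~6.6]{LQF2} after choosing a basis). Because $\Trd_A$ commutes with scalar extension (Lemma~\ref{tr-extend}) and so does the trace of a finite \'etale extension, $T_A^R\ox_R k(\alpha)=T^{k(\alpha)}_{A_\alpha}$ for every $\alpha$. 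Applying Lemma~\ref{prop:rc} then yields $\sign_\alpha T_A^R=n$ on $C_4$, $-n$ on $C_6$, $0$ on $C_5$ (the Scharlau transfer of any form from $k(\alpha)(\sqrt{-1})$ to $k(\alpha)$ is hyperbolic at $\bar\alpha$, since $-1<0$ there), $2n$ on $C_7$ (where $T_A^R$ decomposes as $T_{M_n(k(\alpha))}\perp T_{M_n(k(\alpha))}$), and $-2n$ on $C_8$. On each piece $U$ the possible values $\{n,-n\}$ (first kind) or $\{0,2n,-2n\}$ (second kind) are pairwise distinct, so each $C_i\cap U$ is the preimage of a specific integer under the continuous map $\sign_\bullet T_A^R|_U$ and is clopen; the finite union over pieces then gives that each $C_i$ is clopen.

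The main obstacle is the signature computation in the second-kind cases, where $T_A^R$ involves the \'etale trace $\Tr_{Z(A_\alpha)/k(\alpha)}$. For $C_5$ one has to verify that the Scharlau transfer along $\Tr_{k(\alpha)(\sqrt{-1})/k(\alpha)}$ of any $k(\alpha)(\sqrt{-1})$-form is hyperbolic at $\bar\alpha$, and for $C_7,C_8$ one needs to identify $T_A^R$ as an orthogonal sum over the two factors of the product algebra $Z(A_\alpha)=k(\alpha)\times k(\alpha)$ before invoking Lemma~\ref{prop:rc}.
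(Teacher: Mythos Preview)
Your proposal is correct and follows essentially the same strategy as the paper: locally constant ranks of $A$, $Z(A)$, and $\Sym(A,\s)$ to separate $C_1,C_2,C_3$, and the total signature of the trace quadratic form together with Lemma~\ref{prop:rc} to separate $C_4,\dots,C_8$. The only notable differences are cosmetic: you make the composition with $\Tr_{Z(A)/R}$ explicit (the paper's $T_A$ is $Z(A)$-valued and this step is left implicit there), and you compute the signature on $C_5$ directly via the Scharlau transfer from $k(\alpha)(\sqrt{-1})$, whereas the paper obtains $C_5$ as the complement of $C_4\cup C_6\cup C_7\cup C_8$.
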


\begin{proof}
  Let $r_\alpha= \dim_{k(\alpha)} A_\alpha $ for every $\alpha\in \Sper R$.

  If $\s_\alpha$ is of the first kind, then $r_\alpha = m_\alpha^2$ for some 
  $m_\alpha 
  \in \N$, and $\dim_{k(\alpha)} \Sym(A_\alpha, \s_\alpha) = 
  m_\alpha(m_\alpha+1)/2$ if
  $\s_\alpha$ is orthogonal, while $\dim_{k(\alpha)} \Sym(A_\alpha, \s_\alpha) = m_\alpha(m_\alpha-1)/2$ if
  $\s_\alpha$ is symplectic, cf. \cite[Proposition~2.6]{BOI}.

  If $\s_\alpha$ is of the second kind, then $r_\alpha=2m_\alpha^2$ for some 
  $m_\alpha \in \N$ (see
  \cite[p.~21, 3 lines before Proposition~2.15]{BOI}) and $\dim_{k(\alpha)}
  \Sym(A_\alpha, \s_\alpha) = m_\alpha^2$ by \cite[Proposition~2.17]{BOI}.

  Therefore, the three possible values for $\dim_{k(\alpha)} \Sym(A_\alpha,
  \s_\alpha)$ are
  \[\sqrt{r_\alpha}(\sqrt{r_\alpha}+1)/2, \ \sqrt{r_\alpha}(\sqrt{r_\alpha}-1)/2 \text{ and } r_\alpha/2.\]
  Since these values are all different and since
  $\dim_{k(\alpha)} \Sym(A_\alpha, \s_\alpha)=\rk_{\alpha^0} \Sym(A, \s)$,
  where $\alpha^0:=\Supp(\alpha)$,
  it follows that
  \begin{align*}
    C_1&= \{\alpha \in \Sper R \mid \rk_{\alpha^0} \Sym(A, \s) = 
          \sqrt{r_\alpha}(\sqrt{r_\alpha}+1)/2\},\\
    C_2&= \{\alpha \in \Sper R \mid \rk_{\alpha^0} \Sym(A,\s) = 
          \sqrt{r_\alpha}(\sqrt{r_\alpha}-1)/2\},\\
    C_3&= \{\alpha \in \Sper R \mid \rk_{\alpha^0} \Sym(A,\s) = r_\alpha/2\}.    
  \end{align*}
  Since $A = \Sym(A,\s) \oplus \Skew(A,\s)$ (note that this uses the
  assumption $2\in R^\x$)
  and is projective over $R$, we get
  that $\Sym(A,\s)$ is projective over $R$. In particular, the map 
  \[
    \rk_\bullet \Sym(A,\s) : \Spec R \rightarrow \Z
  \]
  is continuous, cf. \cite[p.~12]{Saltman99}.  
  Since the map $\Sper R
  \rightarrow \Spec R$, $\alpha \mapsto \alpha^0$ is continuous (with the
  Harrison topology on $\Sper R$, cf. \cite[Proposition~3.3.3]{KSU}), 
  the composition
  of both maps is continuous. It follows that $C_1$, $C_2$ and $C_3$ are clopen.

  We now consider the sets  $C_4$ to $C_8$. Observe that (using
  Proposition~\ref{prop:rc})
  \begin{itemize}
    \item If $A \ox_R k(\alpha) \cong M_{m_\alpha}(k(\alpha))$, then 
    $\rk_{\alpha^0} A =
      \dim_{k(\alpha)} A_\alpha = m_\alpha^2$ and $\sign_\alpha T_A = m_\alpha 
      = \sqrt{r_\alpha}$.
    \item If $A \ox_R k(\alpha) \cong M_{m_\alpha/2}((-1,-1)_{k(\alpha)})$, 
    then 
    $\rk_{\alpha^0}
      A = \dim_{k(\alpha)} A_\alpha = m_\alpha^2$ and $\sign_\alpha T_A = 
      -m_\alpha =
      -\sqrt{r_\alpha}$.
    \item If $A \ox_R k(\alpha) \cong M_{m_\alpha}(k(\alpha)) \x 
    M_{m_\alpha}(k(\alpha))$, then 
    $\rk_{\alpha^0}
      A = \dim_{k(\alpha)} A_\alpha = 2m_\alpha^2 $ and 
      $\sign_\alpha T_A = 2m_\alpha =
      \sqrt{2r_\alpha}$.
    \item If $A \ox_R k(\alpha) \cong M_{m_\alpha/2}((-1,-1)_{k(\alpha)}) \x
      M_{m_\alpha/2}((-1,-1)_{k(\alpha)})$, then $\rk_{\alpha^0} A = 
      \dim_{k(\alpha)} A_\alpha
      = 2m_\alpha^2$ and $\sign_\alpha T_A = -2m_\alpha = -\sqrt{2r_\alpha}$.
  \end{itemize}
  Therefore,
  \[C_4 =
  \{\alpha \in \Sper R \mid \sign_\alpha T_A = \sqrt{r_\alpha}\},\]
  which is clopen in $\Sper R$ for the Harrison topology since $T_A$ is a
  nonsingular
  quadratic form over $R$, cf. \cite[Section~1.3]{mahe82}. Similar arguments show that the 
  sets $C_6$, $C_7$ and $C_8$  are clopen. 
  Finally, $C_5$ is clopen since $C_5= \Sper R \sm (C_4\cup C_6\cup C_7\cup 
  C_8)= C_3 \sm (C_7\cup C_8)$. 
\end{proof}

\begin{thm}\label{nil-clopen}
  The set $\Nil[A,\s]$
  is a clopen subset of $\Sper R$ for the Harrison topology
  (and thus also for the constructible topology).
\end{thm}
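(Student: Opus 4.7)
The plan is to combine Remark~\ref{cases-nil}, which describes membership in $\Nil[A,\s]$ purely in terms of the type of the involution $\s_\alpha$ and the isomorphism class of $A_\alpha$ as a product of matrix algebras over $k(\alpha)$, $k(\alpha)(\sqrt{-1})$, or $(-1,-1)_{k(\alpha)}$, with Lemma~\ref{clopen-sets}, which exhibits each of these conditions as a clopen subset of $\Sper R$ in the Harrison topology. Thus no further geometric or analytic work is needed: the theorem reduces to a purely set-theoretic bookkeeping in the boolean algebra of Harrison-clopen subsets of $\Sper R$.

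Concretely, I would write
\[
  \Nil[A,\s] \;=\; (C_1\cap C_6)\,\cup\,(C_2\cap C_4)\,\cup\,\bigl(C_3\cap(C_7\cup C_8)\bigr),
\]
where the three disjuncts correspond respectively to the orthogonal case with quaternion division algebra, the symplectic split case, and the unitary case where the centre splits (yielding either the split quaternion$\,\times\,$split quaternion algebra or $M_n\x M_n$). Each of the three lines of Remark~\ref{cases-nil} matches exactly one of these disjuncts, so the equality is immediate from the characterisation of $\Nil[A,\s]$ and the definitions of the $C_i$.

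Since finite intersections and unions of clopen sets are clopen, and each $C_i$ is Harrison-clopen by Lemma~\ref{clopen-sets}, it follows that $\Nil[A,\s]$ is clopen for the Harrison topology. For the final assertion about the constructible topology, the point is simply that the constructible topology is finer than the Harrison topology, so any Harrison-clopen set is a fortiori constructible-clopen. I do not anticipate any real obstacle here; the only step that requires a small sanity check is verifying that the three cases in Remark~\ref{cases-nil} exhaust precisely the combinations $(C_1,C_6)$, $(C_2,C_4)$, and $(C_3, C_7\cup C_8)$, but this is immediate by inspection.
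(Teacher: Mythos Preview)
Your proposal is correct and essentially identical to the paper's own proof: the paper writes $\Nil[A,\s]= (C_1\cap C_6)\cup (C_2\cap C_4)\cup (C_3\cap C_7)\cup (C_3\cap C_8)$, which differs from your expression only by distributivity, and invokes exactly the same two ingredients, Remark~\ref{cases-nil} and Lemma~\ref{clopen-sets}.
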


\begin{proof}   
  By Lemma~\ref{clopen-sets} and the
  description of nil-orderings in Remark~\ref{cases-nil}, we
  have
  \begin{equation}\label{eq-nil}
    \Nil[A,\s]= (C_1\cap C_6)\cup (C_2\cap C_4)\cup (C_3\cap C_7)\cup 
    (C_3\cap C_8),
  \end{equation}
  which proves the result. 
\end{proof}

\subsection{The $\CM$-signature is almost 
$*$-multiplicative}\label{secstarmult}

We recall from \cite[Section~4.4]{A-U-Az-PLG} 
that there exists a pairing $*$ of hermitian forms 
$(M_1,h_1)$ and $(M_2,h_2)$
over $(A,\s)$ 
(first studied in detail by N.~Garrel in 
\cite{garrel-2023} for central simple algebras with involution)
such
that $h_1 * h_2$ is a hermitian form over $(Z(A),\iota)$, where 
$\iota:=\s|_{Z(A)}$ (and thus is a quadratic form over $R$ if $Z(A)=R$), and which preserves orthogonal sums, isometries and
nonsingularity, cf. \cite[Corollary~4.8]{A-U-Az-PLG}. 
The form $h_1 * h_2$ is defined on the $Z(A)$-module 
\[(M_1 \ox_{Z(A)} \io M_2)\ox_{A\ox_{Z(A)} \io A}A,\]
where the superscript $\iota$ on $A$ and $M_2$ indicates that the left
action of $Z(A)$ is twisted by $\iota$. The form $h_1 * h_2$ is then
obtained by computing $T_\s \bullet (h_1 \ox \io h_2)$, where $T_\s$ is
the form
$A\x A \to Z(A), (x,y)\mapsto \Trd_A(\s(x)y)$, and $\bullet$ denotes the
product of forms from \cite[I, (8.1), (8.2)]{knus91}. 

In concrete terms this
means that for all $m_1, m_1' \in M_1$, $m_2, m_2' \in M_2$ and $a,a'\in A$
we have
\[
  h_1 * h_2 (m_1\ox m_2\ox a, m_1' \ox m_2'\ox a')=
  \Trd_A\bigl(h_1(m_1a, m_1' a') \s(h_2(m_2,m_2'))\bigr),
\]
cf. \cite[Equation~(4.8)]{A-U-Az-PLG}. 

The pairing
$*$ 
satisfies the following  ``pivot property'' for all hermitian forms 
$h_1, h_2, h_3$  over $(A,\s)$:
\begin{equation}\label{pivot}
  (h_1 * h_2) \ox_{Z(A)} h_3 \simeq  (h_3 * h_2 ) \ox_{Z(A)} h_1,
\end{equation}
cf. \cite[Theorem~4.9]{A-U-Az-PLG}. We also note that if 
$a,b \in \Sym(A^\x,\s)$, then by \cite[Proposition~4.9]{garrel-2023} or 
\cite[Lemma~4.11]{A-U-Az-PLG} we have
\[
  \qf{a}_\s * \qf{b}_\s \simeq \vf_{a,b,\s},
\]
where $\vf_{a,b,\s}(x,y):=\Trd_A(\s(x)ayb)$.

\begin{defn}
  Let $\alpha\in \Sper R$. If $\alpha\in \Nil[A,\s]$, we define 
  $\lambda_\alpha:=1$, otherwise we define $\lambda_\alpha$ to be the 
  index 
  of the central simple algebra $A\ox_R k(\alpha)$ 
  (i.e., the degree of its division algebra part). 
\end{defn}

With notation as in Lemma~\ref{clopen-sets} we have for 
$\alpha \in \Sper R \sm  \Nil[A,\s]$ that $\lambda_\alpha=1$ on $C_4$ and $C_5$,
and $\lambda_\alpha=2$ on $C_6$.
In particular, the map $\lambda_\bullet: \Sper R \to \{1,2\}$ is continuous
by Lemma~\ref{clopen-sets}.

\begin{thm}\label{sig-mult}
  Let $(M_1, h_1)$ and $(M_2, h_2)$ be hermitian forms over $(A,\s)$, and let 
  $\alpha \in \Sper R$.
  Then  
  \[\sign_\alpha (h_1 * h_2) = \lambda_\alpha^2 \cdot\sign^\CM_\alpha h_1 \cdot 
  \sign^\CM_\alpha h_2,\]
  where $\sign_\alpha$ is defined in Section~\ref{quad-et} if $Z(A)\not=R$
  and is the usual
  signature of quadratic forms otherwise, and 
  the same Morita equivalence is used in the
  computation  of the
  signatures on the right hand side, cf. Remark~\ref{choice-Morita}.
\end{thm}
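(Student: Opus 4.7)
The plan is to reduce to the central simple case over a real closed field by scalar extension to $k(\alpha)$, exploit the pivot property of the $*$-pairing to collapse the problem to a rank-one identity, and finish by a case analysis based on the structure type of $(A_\alpha, \s_\alpha)$.

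First, both sides of the equality commute with scalar extension $R \to k(\alpha)$: the $*$-pairing is defined via $\Trd_A$ and tensor products, which commute with scalar extension by Lemma~\ref{tr-extend}, giving $(h_1 * h_2) \otimes k(\alpha) \simeq (h_1 \otimes k(\alpha)) * (h_2 \otimes k(\alpha))$; the signatures $\sign^\CM_\alpha$ and $\sign_\alpha$ (including the second case of~\eqref{signcentre2}) are by definition evaluated after extending to $k(\alpha)$; and $\lambda_\alpha$ depends only on $A_\alpha$. Hence it suffices to treat the case in which $R = F$ is real closed and $(A, \s)$ is a central simple $F$-algebra with involution, and to work with the unique ordering on $F$.

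In that setting, the pivot property~\eqref{pivot} together with the compatibility of $\sign^\CM$ with tensoring by a form over $(Z(A), \iota)$ (an extension of Remark~\ref{signprop} to the central factor) yields
\[
  \sign_F(h_1 * h_2) \cdot \sign^\CM h_3 = \sign_F(h_3 * h_2) \cdot \sign^\CM h_1
\]
for all hermitian forms $h_1, h_2, h_3$ over $(A, \s)$. Combined with additivity of $*$ in each argument and the diagonalisability of hermitian forms over $(A, \s)$, this reduces the theorem to the rank-one identity
\[
  \sign_F \vf_{a, b, \s} = \lambda^2 \cdot \sign^\CM \qf{a}_\s \cdot \sign^\CM \qf{b}_\s
\]
for $a, b \in \Sym(A, \s)^\times$. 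If $\alpha \in \Nil[A, \s]$, the right-hand side vanishes; the left-hand side vanishes too, either trivially when $Z(A) \cong F \x F$ by~\eqref{signcentre1}, or by direct computation of $\vf_{a,b,\s}$ on the first-kind nil algebras described in Remark~\ref{cases-nil}.

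The main technical obstacle is the explicit evaluation of $\sign_F \vf_{a,b,\s}$ in each of the non-nil structure types of Remark~\ref{A-extend}(4). In each type I would use the explicit Morita equivalence $\CM_{\bar\alpha}$ from \cite[Section~3.2]{A-U-Kneb}, combined with Lemmas~\ref{trd} and~\ref{prop:rc}, to compute both sides and to confirm the factor $\lambda^2 \in \{1, 4\}$. Intuitively the factor $\lambda^2$ reflects the fact that $\vf_{a,b,\s}$ lives on $A$ viewed as a $Z(A)$-module, whose rank exceeds that of the Morita-equivalent module underlying $\qf{a}_\s$ by exactly $\lambda^2$. Signs are coherent thanks to Remark~\ref{choice-Morita}: the product $\sign^\CM \qf{a}_\s \cdot \sign^\CM \qf{b}_\s$ is insensitive to the Morita choice, as it must be since the left-hand side is intrinsic. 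This case analysis is modelled on Garrel's treatment of the central simple case in \cite[Section~4]{garrel-2023}, and the difficulty is bookkeeping rather than conceptual.
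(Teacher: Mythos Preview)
Your overall strategy---extend scalars to $k(\alpha)$, reduce to the rank-one identity $\sign_F \vf_{a,b,\s} = \lambda^2 \sign^\CM\qf{a}_\s \cdot \sign^\CM\qf{b}_\s$, then compute case by case---is exactly the paper's. Two points, however, need repair.

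First, ``diagonalisability of hermitian forms over $(A,\s)$'' is false as stated (the paper notes this explicitly in the preliminaries). The correct tool is the \emph{weak} diagonalisation lemma \cite[Lemma~2.2]{A-U-pos}: only $N\times h$ is diagonalisable for some $N\in\N$, and you must then divide out the $N^2$ on both sides. This is easy to fix but should be said correctly. Relatedly, the pivot identity you derive is correct but plays no role in your reduction to rank one (which you carry out by additivity and diagonalisation anyway); it could be dropped, or exploited more sharply by taking $h_3=h_2$ to reduce to the single-form statement $\sign(h*h)=\lambda^2(\sign^\CM h)^2$.

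Second, your sketch of the rank-one case hides the only nontrivial steps. After writing $(A,\s)\cong (M_n(D),\Int(\Psi)\circ\vt^t)$, the elements $a,b\in\Sym(A^\times,\s)$ are arbitrary $\Psi$-hermitian matrices with entries in $D$, and Lemmas~\ref{trd}--\ref{prop:rc} alone will not evaluate $\vf_{a,b,\s}$ on such data. The paper's proof makes two moves you omit: a scaling Morita equivalence that simultaneously rewrites $\qf{a}_\s * \qf{b}_\s \simeq \qf{\Psi^{-1}a}_{\vt^t} * \qf{\Psi^{-1}b}_{\vt^t}$ and replaces $\sign^\CM\qf{a}_\s,\ \sign^\CM\qf{b}_\s$ by $\delta\sign^\CM\qf{\Psi^{-1}a}_{\vt^t},\ \delta\sign^\CM\qf{\Psi^{-1}b}_{\vt^t}$ with a common sign $\delta$ (which cancels in the product); then the Principal Axis Theorem, which diagonalises $\Psi^{-1}a,\ \Psi^{-1}b$ as matrices in $M_n(F)$, reducing the computation of $\vf_{a,b,\vt^t}$ to the standard orthogonal basis $\{e_{pq}\}$ (or $\{e_{pq},ie_{pq},je_{pq},ke_{pq}\}$ in the quaternionic case). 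Without these reductions the ``bookkeeping'' does not go through. For the nil first-kind case, the paper avoids any direct computation of $\vf_{a,b,\s}$ by invoking Pfister's local-global principle to see that $h_1,h_2$ are weakly hyperbolic, hence so is $h_1*h_2$; your proposed direct computation would work too but is more laborious.
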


\begin{proof}
  Since the signatures on both sides of the equality are computed by extending
  sca\-lars to the real closed field $k(\alpha)$, we may assume that $(A,\s)$ is
  a central simple $F$-algebra with involution, where $F$ is a real closed
  field, and that $\alpha$ is the unique ordering on $F$.
  \smallskip
  
  \noindent\emph{Fact:} If one of $h_1$, $h_2$ is hyperbolic or the zero form,
  then $h_1 * h_2$ is hyperbolic or the zero form, respectively. 
  \smallskip

  \noindent\emph{Proof of the Fact:}   Assume that $h_1$ is hyperbolic.
  Let $U_1$ be a lagrangian of $h_1$, cf. \cite[p~19]{knus91}. 
  Then $U:=U_1\ox_S \io M_2$ is a 
  lagrangian of
  $h_1\ox \io h_2$, and $U \ox_{A\ox_S \io A} A$ is a lagrangian of
  $h_1 * h_2= T_\s \knp (h_1\ox \io h_2)$, which is then hyperbolic. 
  If $h_2$ is hyperbolic, the proof is similar. The statement about the
  zero form is clear from the definition of $h_1 * h_2$. This proves the fact.
  \smallskip
  
  By \cite[Proposition~A.3]{A-U-PS} we can write 
  $h_i\simeq h_i^\ns \perp\zeta_i$ ($i=1,2$),
  where $h_i^\ns$ is nonsingular and $\zeta_i$ is the zero form (defined on a
  module of appropriate
  rank), both unique up to isometry. 
  Recalling that $*$ respects isometries and orthogonal sums, cf. 
  \cite[Corollary~4.8]{A-U-Az-PLG}, and using the Fact, it follows
  that $h_1 * h_2 \simeq h_1^\ns * h_2^\ns \perp \zeta$, for some zero form
  $\zeta$. Therefore, 
  since any zero form has signature zero, we may
  assume for the purpose of computing 
  signatures,  that $h_1$ and $h_2$ are both nonsingular.

  If $\alpha\in\Nil[A,\s]$, then $h_1$ and $h_2$
  are weakly hyperbolic by Pfister's local-global
  principle for hermitian forms, cf. \cite[Theorem~4.1]{L-U} since
  $\alpha$ is the unique ordering on $F$. 
  By the Fact, $h_1 * h_2$ is weakly hyperbolic, and the claim then
  follows since the signatures of $h_1$, $h_2$ and
  $h_1 * h_2$ are all zero. 
  
  Hence we may assume that $\alpha\not\in\Nil[A,\s]$, and thus, by the
  Skolem-Noether theorem (cf. \cite[Propositions~2.7 and 2.18]{BOI}), 
  that
  $(A,\s)\cong   (M_n(D),\Int(\Psi) \circ \vt^t)$, where
  the notation is as follows: $(D,\vt)\in\{ (F, \id), (F(\sqrt{-1}),\iota), 
  ((-1,-1)_F,\gamma)\}$; $\iota$ and $\gamma$ denote conjugation;
  $\vt^t$ denotes conjugate transposition $(m_{ij})\mapsto (\vt(m_{ji}))$;  
  $\Psi\in M_n(D)$ is an
    invertible matrix that satisfies $\vt^t(\Psi)=\ve\Psi$ for some
    $\ve\in \{-1,1\}$. 
  By \cite[Second half of p. 499]{A-U-prime} 
  or \cite[Remark~6.2]{A-U-pos}, 
  and since $\alpha \not \in \Nil[A,\s]$, we have $\ve = 1$.   
  Without loss of generality we may assume that this isomorphism of algebras
  with involution is an equality, $(A,\s)= (M_n(D),
  \Int(\Psi) \circ \vt^t)$.
    
  Since $\alpha$ is the unique ordering on $F$,  we omit it as an index in the
  signature calculations that follow.

  By the ``weak diagonalization lemma'' \cite[Lemma~2.2]{A-U-pos}, 
  there are $N, r,
  s \in \N$ such that $N \x h_1 \simeq \qf{a_1, \ldots, a_r}_\s$ and $N \x h_2
  \simeq \qf{b_1, \ldots, b_s}_\s$, where $a_1,\ldots, a_r, b_1,\ldots, b_s \in
  \Sym(A^\x, \s)$. Recall from \cite[Proposition~3.6]{A-U-Kneb} that
  the $\CM$-signature is $\Z$-linear and from \cite[Corollary~4.8]{A-U-Az-PLG} that
  the pairing $*$ commutes with orthogonal sums in each component. It follows
  that
    \begin{align*}
      \sign (h_1 * h_2) &= \lambda_\alpha^2 \cdot \sign^\CM h_1 \cdot 
      \sign^\CM  h_2 \\
        &\Leftrightarrow 
      N^2 \cdot \sign (h_1 * h_2) =  \lambda_\alpha^2 \cdot (N\cdot \sign^\CM h_1) 
      (N\cdot \sign^\CM h_2) \\
        &\Leftrightarrow 
      \sign ((N \x h_1) * (N \x h_2)) = \lambda_\alpha^2 \cdot\sign^\CM (N \x h_1) 
      \cdot \sign^\CM
        (N \x h_2) \\
        &         \Leftrightarrow 
      \sign (\qf{a_1, \ldots, a_r}_\s * \qf{b_1, \ldots, b_s}_\s) =  \\
      & \hspace*{10em}
      \lambda_\alpha^2 \cdot
      \sign^\CM \qf{a_1, \ldots, a_r}_\s \cdot \sign^\CM \qf{b_1, \ldots, 
      b_s}_\s \\
        &\Leftrightarrow 
      \sign (\bigperp_{i,j} \qf{a_i}_\s * \qf{b_j}_\s) = \lambda_\alpha^2 
      \sum_{i,j}
      \sign^\CM \qf{a_i}_\s \cdot \sign^\CM \qf{b_j}_\s \\
        &\Leftrightarrow 
      \sum_{i,j} \sign (\qf{a_i}_\s * \qf{b_j}_\s) = \lambda_\alpha^2 \sum_{i,j}
      \sign^\CM \qf{a_i}_\s \cdot \sign^\CM \qf{b_j}_\s.
    \end{align*}
    Therefore, it suffices to show that
\begin{equation}\label{sign-star}
      \sign  \qf{a}_\s * \qf{b}_\s = \lambda_\alpha^2\cdot
          \sign^\CM \qf{a}_\s \cdot \sign^\CM \qf{b}_\s,
\end{equation}
    for all $a,b \in \Sym(A,\s)$ (in fact, it suffices to take 
    $a,b \in \Sym(A^\x,\s)$).

  Now recall  
  that $\qf{a}_\s * \qf{b}_\s
  \simeq\vf_{a,b,\s}$, where $\vf_{a,b,\s}(x,y) := \Trd_A(\s(x)ayb)$,
  cf. \cite[Lemma~4.11]{A-U-Az-PLG}. Since 
  $\s=\Int(\Psi)\circ \vt^t$, we have
    \begin{align*}
      \vf_{a,b,\s}(x,y) &= \Trd_A(\s(x)ayb)\\
                        &= \Trd_A(\Psi\vt^t(x) \Psi^{-1} ayb)\\
                        &= \Trd_A(\vt^t(x) (\Psi^{-1} a)y(b\Psi)),
    \end{align*}
  and thus $\qf{a}_\s * \qf{b}_\s \simeq \qf{\Psi^{-1} a}_{\vt^t} * 
  \qf{b\Psi}_{\vt^t} $.  Since $\qf{b\Psi}_{\vt^t} \simeq
  \qf{\vt^t(\Psi^{-1})  (b\Psi)\Psi^{-1}}_{\vt^t}=
  \qf{\Psi^{-1} b}_{\vt^t}$, 
  we conclude that  $\qf{a}_\s * \qf{b}_\s \simeq 
  \qf{\Psi^{-1} a}_{\vt^t} * \qf{\Psi^{-1} b}_{\vt^t} $.
  
  On the right-hand side of \eqref{sign-star} we can apply the 
  ``scaling-by-$\Psi^{-1}$'' Morita equivalence 
  and obtain
  $\sign^\CM\qf{a}_\s = \delta \sign^\CM\qf{\Psi^{-1} a}_{\vt^t}$ and
  $\sign^\CM\qf{b}_\s = \delta \sign^\CM\qf{\Psi^{-1} b}_{\vt^t}$, where 
  $\delta\in\{-1,1\}$, cf. \cite[Proposition~3.4]{A-U-Kneb}. 

  Thus we may assume that
  $(A,\s) = (M_n(D), \vt^t)$, and that
  $a,b\in \Sym(M_n(D), \vt^t)$ and $\s=\vt^t$ in \eqref{sign-star}.
    By the Principal Axis Theorem (cf. \cite[Appendix A]{A-U-pc-gauge} and 
  \cite[Corollary~6.2]{Zhang}), we may further assume that $a$ and $b$ are
  diagonal matrices in $M_n(F)$, say $a=\diag(a_1,\ldots,a_n)$ and
  $b=\diag(b_1,\ldots,b_n)$. From the computation of $\CM$-signatures it then
  follows that
  \[
    \sign^\CM \qf{a}_{\vt^t} \cdot \sign^\CM \qf{b}_{\vt^t}
    =\sign \qf{a_1,\ldots,a_n}\cdot \sign \qf{b_1,\ldots, b_n}.
  \]
  Let $\{e_{pq}\}$ denote the canonical $D$-basis of $M_n(D)$.
  We now consider three cases, that correspond to $\alpha \in C_4$, $C_5$
  and $C_6$, respectively:
  
  \begin{enumerate}[a)] 
    \item $(D,\vt)=(F,\id)$. A direct computation shows that
    $\{e_{pq}\}$  is an orthogonal basis for the symmetric bilinear form
    $\vf_{a,b, \vt^t}$ over $F$, and that the matrix of $\vf_{a,b,\vt^t}$ 
    relative to  
    this basis is
      \[\diag(a_1b_1, \ldots, a_1b_n, \ldots, a_nb_1, \ldots, a_nb_n).\]
    Therefore,
      \[
        \qf{a}_{\vt^t} *\qf{b}_{\vt^t}   
        \simeq \qf{a_1b_1, \ldots, a_1b_n, \ldots, a_nb_1, \ldots, a_nb_n}.
      \]
    
    \item $(D,\vt)=(F(\sqrt{-1}),\iota)$. A direct computation shows that
    $\{e_{pq}\}$  is an orthogonal basis for the hermitian form
    $\vf_{a,b, \vt^t}$ over $(F(\sqrt{-1}),\iota)$, 
    and that the matrix of $\vf_{a,b,\vt^t}$ relative to  
    this basis is
      \[\diag(a_1b_1, \ldots, a_1b_n, \ldots, a_nb_1, \ldots, a_nb_n).\]
    Therefore,
      \[
        \qf{a}_{\vt^t} *\qf{b}_{\vt^t}   
        \simeq \qf{a_1b_1, \ldots, a_1b_n, \ldots, a_nb_1, \ldots, 
        a_nb_n}_{\iota}.
      \]
    
    \item $(D,\vt)=((-1,-1)_F,\gamma)$. Let $\{1,i,j,k\}$ be the standard $F$-basis
    of $D$. 
    A direct computation shows that
    $\{ e_{pq}, i\cdot e_{pq},j\cdot e_{pq},k\cdot e_{pq} \}$  
    is an orthogonal basis for the symmetric bilinear form
    $\vf_{a,b, \vt^t}$ over $F$, and that the matrix of $\vf_{a,b,\vt^t}$ 
    relative to  
    this basis is the block diagonal matrix
      \[\diag( a_1b_1\Delta, \ldots,  a_1b_n \Delta, \ldots,  a_nb_1 \Delta, 
      \ldots,  a_nb_n \Delta),\]
      where $\Delta=\diag(2,2,2,2)$.
    Therefore,
      \[
        \qf{a}_{\vt^t} *\qf{b}_{\vt^t}   
        \simeq \qf{2,2,2,2}\ox
        \qf{a_1b_1, \ldots, a_1b_n, \ldots, a_nb_1, \ldots, a_nb_n}.
      \]
  \end{enumerate}
    
    Since
    \begin{align*}
      \sign \qf{a_1b_1, \ldots, a_1b_n, \ldots, a_nb_1, \ldots, a_nb_n} &=
      \sign (\qf{a_1,\ldots, a_n}\ox \qf{b_1,\ldots, b_n})\\
      &=\sign \qf{a_1,\ldots, a_n} \cdot \sign \qf{b_1,\ldots, b_n},
    \end{align*}
    we obtain 
    \begin{align*}
      \sign \qf{a}_{\vt^t} *\qf{b}_{\vt^t} &=\lambda_\alpha^2 \cdot \sign 
        \qf{a_1,\ldots, a_n} \cdot \sign \qf{b_1,\ldots, b_n}\\
      &=\lambda_\alpha^2 \cdot \sign^\CM \qf{a}_{\vt^t} \cdot \sign^\CM 
      \qf{b}_{\vt^t}.\qedhere   
    \end{align*}
\end{proof}

\section{Continuity of some total signature maps}

We recall the case of quadratic forms over a commutative ring:

\begin{prop}\label{qf-cont}
  Let $q$ be a quadratic form over $R$, defined on a finitely
  generated projective $R$-module $M$. Then the total signature map
  $\sign_\bullet q : \Sper R
  \rightarrow \Z$ is continuous, where $\Sper R$ is equipped with the
  constructible topology. If $q$ is nonsingular, it is also continuous if $\Sper
  R$ is equipped with the Harrison topology.
\end{prop}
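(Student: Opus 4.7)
The plan is to reduce the question to a local statement about signatures of symmetric matrices over ordered fields, and then invoke classical results separately for the two topologies. Since $M$ is finitely generated projective over $R$, there exist $f_1,\ldots,f_m \in R$ generating the unit ideal such that each $M_{f_i}$ is free over $R_{f_i}$. The sets
\[
U_i := \{\alpha \in \Sper R \mid f_i \notin \Supp(\alpha)\} = \mathring H(f_i) \cup \mathring H(-f_i)
\]
are Harrison-open (hence also constructible-open) and together cover $\Sper R$. Since continuity is a local property in both topologies, I would work on a single $U_i$ and, after choosing a basis of $M_{f_i}$, assume that $q$ is represented by a symmetric matrix $B$; then $\sign_\alpha q$ is simply the ordinary Sylvester signature of $B$ viewed over the real closed field $k(\alpha)$ for each $\alpha \in U_i$.

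For the constructible topology, the key point is that the signature of a symmetric matrix over an ordered field is a semi-algebraic invariant of its entries: for each integer $k$, the condition $\sign_\alpha q = k$ can be written as a boolean combination of sign conditions on polynomial expressions in the entries of $B$, obtainable for instance from the signs of the coefficients of the characteristic polynomial of $B$ via Descartes' rule, or from Jacobi-type determinantal criteria. Each such sign condition cuts out a constructible subset of $\Sper R$, so every level set $\{\alpha \mid \sign_\alpha q = k\}$ is constructible. Since the rank of $M$ is locally bounded, only finitely many values of $k$ can occur on $U_i$, which yields continuity for the constructible topology.

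For the Harrison topology under the additional nonsingularity assumption, $B$ is invertible and $q$ defines a class in the Witt ring $W(R)$. I would then invoke Mahé's theorem, cf. \cite[Section~1.3]{mahe82}, which asserts that the total signature of a nonsingular quadratic form takes values in the locally constant $\Z$-valued functions on $\Sper R$ with the Harrison topology; this gives Harrison-continuity of $\sign_\bullet q$ directly. The main obstacle, and the reason nonsingularity is genuinely needed for this stronger conclusion, is that along a degeneracy locus of $q$ the signature can jump in a way the Harrison topology does not separate — the constructible topology is the correct framework precisely because it does.
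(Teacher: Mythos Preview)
Your proposal is correct and follows essentially the same route as the paper: reduce to the free case via a Zariski cover $D(f_i)$ pulled back to $\Sper R$, then for the constructible topology express the signature of the resulting symmetric matrix through sign conditions on the coefficients of its characteristic polynomial (Descartes' rule), and for the Harrison topology in the nonsingular case invoke Mah\'e. The paper makes the Descartes-rule step explicit via the formula $\sign_\alpha q = \Var_\alpha(1,u_1,\ldots,u_n) - \Var_\alpha(1,-u_1,\ldots,(-1)^n u_n)$, but this is exactly what you sketch.
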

\begin{proof}
  The second statement is in \cite[Section~1.3]{mahe82}. 
  We could not find a reference for the first statement,
  so we provide a proof. Since $M$ is finitely generated projective, there
  are elements $f_1, \ldots, f_k \in R$ such that $\Spec R = D(f_1) \cup \cdots
  \cup D(f_k)$ (using the notation $D(f_i)=\{\fp \in \Spec R \mid f_i\not\in 
  \fp\}$)  and the modules $M_{f_i}$ are free and finitely generated (see
  for instance \cite[Chapitre II, \S5.2, Th\'eor\`eme~1]{bourbaki-AC-1-4}).

  Since the canonical map $\Supp:\Sper R \rightarrow \Spec R$ is continuous (for the
  Harrison topology on $\Sper R$, cf. \cite[Proposition~3.3.3]{KSU}), 
  the sets $\Supp^{-1}(D(f_i))$ are open and so  
  we can assume that $M$ is free (by replacing $M$ by $M_{f_i}$). Let
  $P_q$ be the matrix of $q$ in a fixed basis of $M$ over $R$, and let
  $\chi_q(X) = X^n + u_1X^{n-1} + \cdots + u_n$ be the characteristic polynomial
  of $P_q$. By \cite[Remark~1.3.25(2)]{S24} the signature of $q$ at $\alpha
  \in \Sper R$ is given by $\Var_\alpha(1,u_1,u_2 \ldots, u_n) - \Var_\alpha(1,
  -u_1, u_2, \ldots, (-1)^nu_n)$, where $\Var_\alpha(u_1, \ldots, u_n)$
  represents the number of sign changes with respect to $\alpha$ in the sequence
  $u_1, \ldots, u_n$ (after removing the elements of sign zero at $\alpha$).

  Therefore, for $t \in \N\cup\{0\}$, the set $\{\alpha \in \Sper R \mid 
  \sign_\alpha q =
  t\}$ can be expressed as a boolean combination of sets of the form $\{\alpha
  \in \Sper R \mid \sgn_\alpha u_i = \ve_i\}$ (where $\sgn_\alpha$ denotes
  the sign at $\alpha$), for some $\ve_i \in
  \{-1,0,1\}$, and so is constructible.
\end{proof}

Next, we consider the case of signatures of hermitian forms over a
quadratic \'etale $R$-algebra $S$ with standard involution $\vt$, cf.
Section~\ref{quad-et}.

\begin{prop}\label{pizza}
  Let $h$ be a hermitian form over $(S,\vt)$. The total signature map 
  $\sign_\bullet h : \Sper
  R \rightarrow \Z$ is continuous for the constructible topology on $\Sper R$.
  Furthermore, if $h$ is nonsingular, then  $\sign_\bullet h $ is continuous
  for the Harrison topology on $\Sper R$.
\end{prop}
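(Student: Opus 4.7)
The plan is to reduce continuity of $\sign_\bullet h$ to continuity of the total signature of a quadratic form over $R$, and then to invoke Proposition~\ref{qf-cont}. The key observation is that the formula $\sign_\alpha h = \tfrac12 \sign_\alpha q_h$, which by \eqref{signcentre2} defines $\sign_\alpha h$ on the nonsplit locus, in fact holds for \emph{every} $\alpha \in \Sper R$. The split locus $\{\alpha \in \Sper R \mid S \ox_R \kappa(\alpha) \cong \kappa(\alpha) \x \kappa(\alpha)\}$ is then the only place where this identity needs verification, since \eqref{signcentre1} stipulates $\sign_\alpha h = 0$ there.

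To check the identity on the split locus, I would use that the nonsingular part of $h \ox_R \kappa(\alpha)$ is hyperbolic (cf.\ \cite[Proof of Lemma~2.1(iv)]{A-U-Kneb}) and its singular part is a zero form. Since $\Tr_{S/R}$ commutes with scalar extension (cf.\ \cite[Remark~2.9]{A-U-Az-PLG}) and sends hyperbolic hermitian forms to hyperbolic symmetric bilinear forms (Section~\ref{quad-et}), the quadratic form $q_h \ox_R \kappa(\alpha)$ splits as a hyperbolic quadratic form plus a zero form, and hence has signature $0$. This matches $\sign_\alpha h = 0$ and extends the identity to all of $\Sper R$.

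With $\sign_\bullet h = \tfrac12 \sign_\bullet q_h$ established uniformly, I would then apply Proposition~\ref{qf-cont} to $q_h$. The underlying $R$-module $M$ of $h$ is finitely generated projective over $R$, because $M$ is finitely generated projective over $S$ and $S$ is finitely generated projective over $R$, so $q_h$ is a legitimate quadratic form over $R$ in the sense of Proposition~\ref{qf-cont}, yielding continuity of $\sign_\bullet q_h$ (and hence of $\sign_\bullet h$) for the constructible topology. When $h$ is nonsingular, Section~\ref{quad-et} gives that $\Tr_{S/R}(h)$, and thus $q_h$, is nonsingular, and the Harrison-topology half of Proposition~\ref{qf-cont} delivers the second assertion. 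The only step demanding any real work is the identity on the split locus, which is routine once the hyperbolic/zero decomposition is in hand; I do not anticipate any serious obstacle.
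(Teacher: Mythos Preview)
Your argument is correct, and it takes a genuinely different route from the paper's. The paper does not extend the identity $\sign_\alpha h = \tfrac12 \sign_\alpha q_h$ to the split locus; instead it observes that the split locus is precisely $\Nil[S,\vt]$ (since $\vt$ is of the second kind), invokes Theorem~\ref{nil-clopen} to conclude that this set is clopen for the Harrison topology, and then applies Proposition~\ref{qf-cont} only on the complement $\Sper R \setminus \Nil[S,\vt]$, where \eqref{signcentre2} holds by definition. Your approach sidesteps Theorem~\ref{nil-clopen} entirely by verifying directly that $\sign_\alpha q_h = 0$ on the split locus via the hyperbolic/zero decomposition of $h \ox_R \kappa(\alpha)$ (available from \cite[Proposition~A.3]{A-U-PS}, since $(\kappa(\alpha)\times\kappa(\alpha),\text{exchange})$ is a central simple $\kappa(\alpha)$-algebra with involution in the sense of \cite{BOI}) together with the behaviour of $\Tr_{S/R}$ on hyperbolic and zero forms. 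The payoff is a more self-contained proof that does not rely on the clopenness of $\Nil$; the paper's version is shorter once that machinery is in place, and in any case Theorem~\ref{nil-clopen} is needed elsewhere in the paper, so there is no global economy. Your remark that $M$ is finitely generated projective over $R$ (needed to apply Proposition~\ref{qf-cont} to $q_h$) is a point the paper leaves implicit.
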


\begin{proof}
    According to equation~\eqref{signcentre1}, we have $\sign_\alpha h = 0$ if
    $S \ox_R \kappa(\alpha) \cong \kappa(\alpha) \x \kappa(\alpha)$, i.e., if
    $\alpha \in \Nil[S,\vt]$ (since $\vt$ is of the second kind by
    definition). Since $\Nil[S,\vt]$ is clopen (for the Harrison topology) by
    Theorem~\ref{nil-clopen}, it suffices to consider $\sign_\bullet h$ on the
    set $\Sper R \setminus \Nil[S,\vt]$. On this set $\sign_\bullet h$ is
    defined by equation~\eqref{signcentre2} and the result follows by
    Proposition~\ref{qf-cont}.
\end{proof}

As a consequence we obtain the following result for hermitian forms over 
$(A,\s)$:

\begin{cor}\label{abs-val-cont} 
  Let $h$ be a hermitian form over $(A,\s)$. Then 
  \[|\sign^\CM_\bullet h| = \frac{1}{\lambda_\bullet} 
  \sqrt{\sign_\bullet (h *h)}\]
  on $\Sper R$. In particular, $|\sign^\CM_\bullet h|$ is
  continuous for the constructible topology on $\Sper R$,
  and for the Harrison topology if $h$ is nonsingular.
\end{cor}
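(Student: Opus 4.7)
The plan is to derive the displayed formula directly from Theorem~\ref{sig-mult} and then to obtain the continuity statements by assembling results already in the paper. Applying Theorem~\ref{sig-mult} with $h_1 = h_2 = h$ gives
\[
\sign_\alpha(h*h) = \lambda_\alpha^2 \cdot (\sign^\CM_\alpha h)^2
\]
for every $\alpha \in \Sper R$. Since the right-hand side is a nonnegative integer and $\lambda_\alpha \in \{1,2\}$ is nonzero, I would take square roots and divide by $\lambda_\alpha$ to obtain the asserted identity $|\sign^\CM_\alpha h| = \tfrac{1}{\lambda_\alpha} \sqrt{\sign_\alpha(h*h)}$.

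For the continuity statements, I would first recall that the map $\lambda_\bullet : \Sper R \to \{1,2\}$ is continuous for the Harrison topology (as noted right after the definition of $\lambda_\alpha$, via Lemma~\ref{clopen-sets}), so $1/\lambda_\bullet$ is continuous as well. It thus suffices to show that $\alpha \mapsto \sign_\alpha(h*h)$ is continuous in the appropriate topology. By the discussion at the start of Section~\ref{secstarmult}, the form $h*h$ lives over $(Z(A), \iota)$, and by Definition~\ref{first-def} together with Proposition~\ref{prop:Az}, $Z(A)$ is finite \'etale over $R$ with $R = \Sym(Z(A), \iota)$. This forces $Z(A)$, locally on $\Spec R$, to be either $R$ itself (with $\iota = \id$, so that $h*h$ is a quadratic form over $R$) or a quadratic \'etale extension with its standard involution (so that $h*h$ is a genuine hermitian form). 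Since the rank of $Z(A)$ is locally constant, these cases partition $\Sper R$ into clopen pieces through the continuous support map $\Supp : \Sper R \to \Spec R$, and on each piece the continuity of $\sign_\bullet(h*h)$ for the constructible topology follows from Proposition~\ref{qf-cont} or Proposition~\ref{pizza}, respectively.

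For the Harrison topology the same two propositions apply provided $h*h$ is nonsingular. Since the pairing $*$ preserves nonsingularity (as recalled at the start of Section~\ref{secstarmult}), this is automatic as soon as $h$ is nonsingular, yielding the final assertion. Overall the argument is essentially a packaging of Theorem~\ref{sig-mult} with Propositions~\ref{qf-cont} and \ref{pizza}; the only mild obstacle I anticipate is the bookkeeping needed to treat both cases for $Z(A)$ uniformly, which is dispatched by the clopen decomposition described above.
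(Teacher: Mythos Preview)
Your proposal is correct and follows essentially the same approach as the paper: derive the formula from Theorem~\ref{sig-mult} with $h_1=h_2=h$, then combine the continuity of $\lambda_\bullet$ with Proposition~\ref{pizza} (and Proposition~\ref{qf-cont}) applied to $h*h$, using that $*$ preserves nonsingularity. Your treatment is in fact slightly more explicit than the paper's, which simply invokes Proposition~\ref{pizza} without spelling out the clopen decomposition according to the rank of $Z(A)$; this extra bookkeeping is harmless and arguably clarifies the argument.
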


\begin{proof}
  The first statement follows directly from Theorem~\ref{sig-mult}. 
  The second statement from the fact that $\lambda_\bullet$ is continuous
  and Proposition~\ref{pizza}, since  $h * h$ is 
  a hermitian form over $(Z(A),\iota)$, which is nonsingular if $h$ is
  (as recalled in Section~\ref{secstarmult}). 
\end{proof}

\section{Reference forms and $\eta$-signatures}
\label{sec-eta-sign}

As observed after Definition~\ref{def:sig_h}, the sign of the $\CM$-signature 
at $\alpha \in \Sper R\sm \Nil[A,\s]$ can be changed by taking a
different Morita equivalence. One way to make a choice of sign is to fix a
hermitian form $\eta$ whose $\CM$-signature at $\alpha$
is known to be nonzero, and to choose a Morita equivalence that makes the
$\CM$-signature at $\alpha$ of  $\eta$ positive. 
Therefore: 

\begin{defn}
  A hermitian form $\eta$ over $(A,\s)$  is called a 
  \emph{reference form  for $(A,\s)$} if 
  $\sign^\CM_\alpha \eta\not=0$ for all $\alpha \in \Sper R\sm
      \Nil[A,\s]$.
\end{defn}

In this section we will show that reference forms always exist for
Azumaya algebras with involution over  commutative rings.

\begin{lemma}\label{lem:diag}
  Let $\fp \in \Spec R$ and let $\kappa(\fp)$ be the field of fractions of
  $R/\fp$. 
  Any diagonal hermitian form over $(A\ox_R  \kappa(\fp), \s\ox 
  \id_{\kappa(\fp)})$ is of
  the form $h\ox_R \kappa(\fp)$, for some 
  diagonal hermitian form $h$ over $(A,\s)$.
\end{lemma}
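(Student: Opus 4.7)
The plan is to reduce to lifting symmetric elements from $A\ox_R\kappa(\fp)$ back to $A$, clear denominators, and correct the resulting discrepancy via a diagonal rescaling.

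First I would observe that because $2\in R^\times$, the decomposition $A=\Sym(A,\s)\oplus \Skew(A,\s)$ is preserved under tensoring with $\kappa(\fp)$, and since $\s\ox\id$ acts as $+1$ and $-1$ on the respective summands, one obtains the identification
\[
\Sym(A\ox_R\kappa(\fp),\,\s\ox\id) \;=\; \Sym(A,\s)\ox_R\kappa(\fp).
\]
Consequently, given a diagonal form $\qf{\bar a_1,\ldots,\bar a_\ell}_{\s\ox\id}$, each $\bar a_i$ is a finite sum $\sum_j s_{ij}\ox(r_{ij}/d_{ij})$ with $s_{ij}\in\Sym(A,\s)$, $r_{ij}\in R$, and $d_{ij}\in R\sm\fp$. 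Choosing a common denominator $d\in R\sm\fp$ for all indices $i,j$ simultaneously, I can rewrite
\[
\bar a_i \;=\; t_i\ox \tfrac{1}{d} \quad \text{for some } t_i\in\Sym(A,\s),\ i=1,\ldots,\ell.
\]

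Next I would define $a_i:=d\,t_i\in \Sym(A,\s)$ and let $h:=\qf{a_1,\ldots,a_\ell}_\s$. Inside $A\ox_R\kappa(\fp)$ we then have $a_i\ox 1 = d^2\bar a_i$, so it remains to show $\qf{d^2\bar a_1,\ldots,d^2\bar a_\ell}_{\s\ox\id}\simeq \qf{\bar a_1,\ldots,\bar a_\ell}_{\s\ox\id}$. This is the standard rescaling observation: for any $c$ in the centre of an algebra with involution satisfying $\s(c)=c$ (here $c=d\in R\subseteq Z(A)$, viewed in $\kappa(\fp)^\times$), and any $b\in\Sym$, the map $x\mapsto cx$ defines an isometry $\qf{c^2 b}\simeq\qf{b}$ since $\s(cx)(c^2 b)(cy) \cdot c^{-2} = \s(x)\,b\,y$ up to trivial algebra. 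Taking the orthogonal sum of these isometries component by component yields $h\ox_R\kappa(\fp)\simeq \qf{\bar a_1,\ldots,\bar a_\ell}_{\s\ox\id}$, as required.

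I do not expect a real obstacle here; the only mild subtlety is insisting on a single common denominator $d$ across all $i$, which lets one rescaling factor $d$ work uniformly on every diagonal entry. The argument also makes no invertibility assumption on the $\bar a_i$, which matches the fact that the definition of diagonal hermitian form in this paper only requires the $a_i$ to lie in $\Sym(A,\s)$.
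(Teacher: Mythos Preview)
Your proof is correct and follows essentially the same approach as the paper's: clear denominators by a central $\s$-invariant square and then identify the resulting element with one coming from $\Sym(A,\s)$. The only cosmetic differences are that the paper reduces immediately to a single diagonal entry rather than carrying all $\ell$ entries with a common denominator, and that you make the identification $\Sym(A\ox_R\kappa(\fp),\s\ox\id)=\Sym(A,\s)\ox_R\kappa(\fp)$ explicit whereas the paper leaves it implicit.
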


\begin{proof}
  We write $\s(\fp):= \s\ox 
  \id_{\kappa(\fp)}$.
  It suffices to prove the statement for the form $\vf:=\qf{a}_{\s(\fp)}$,
  where $a=\sum_i a_i\ox ({\ovl{r_i}}/{\ovl{s_i}})$ and $\ovl{r_i},
  \ovl{s_i}$ are in $(R/\fp)\setminus \{0\}$.  
  After multiplying $a$ by the (hermitian) square $1\ox (\prod_i \ovl{s_i})^2$,
  we obtain $\vf \simeq \qf{\sum_i
  a_i \ox \ovl{r'_i} }_{\s(\fp)}$ for some $\ovl{r'_i} \in 
  R/\fp$. Now
  using that $1\ox \ovl{r'_i}=r'_i \ox 1$, we obtain $\vf \simeq
  \qf{(\sum_i a_i r'_i) \ox 1}_{\s(\fp)} = \qf{\sum_i a_i r'_i }_{\s}\ox
  \kappa(\fp)$.	
\end{proof}

\begin{lemma}\label{diag-ref}
  For every $\alpha \in \Sper R \setminus \Nil[A,\s]$, there is
  a (diagonal) hermitian form $\eta_\alpha$ over $(A,\s)$ such that 
  $\sign^\CM_\alpha \eta_\alpha  \not= 0$.
\end{lemma}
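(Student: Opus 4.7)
The plan is to reduce the existence of a diagonal reference form to a lifting problem: find a suitable diagonal hermitian form over the fibre $(A(\alpha),\s(\alpha))$ at $\alpha$ with nonzero signature, and then pull it back to $(A,\s)$ using Lemma~\ref{lem:diag}.

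First, because $\alpha\notin\Nil[A,\s]$, the definition of $\Nil[A,\s]$ together with \cite[Lemma~3.7]{A-U-Az-PLG} tells us that the map $\sign^{\CM_{\bar\alpha}}_{\bar\alpha}$ on hermitian forms over the central simple $\kappa(\alpha)$-algebra with involution $(A(\alpha),\s(\alpha))$ is not identically zero. So I pick some hermitian form $h'$ over $(A(\alpha),\s(\alpha))$ with $\sign^{\CM_{\bar\alpha}}_{\bar\alpha}h'\ne 0$.

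Next, I apply the weak diagonalization lemma \cite[Lemma~2.2]{A-U-pos} to $h'$, producing $N\in\N$ and $a_1,\ldots,a_r\in\Sym(A(\alpha)^\x,\s(\alpha))\cup\{0\}$ with
\[
  N\x h'\simeq \qf{a_1,\ldots,a_r}_{\s(\alpha)}.
\]
Since the $\CM$-signature is $\Z$-linear (Remark~\ref{signprop}), this diagonal form still has nonzero signature at $\bar\alpha$. I then apply Lemma~\ref{lem:diag} with $\fp:=\Supp(\alpha)$ to obtain a diagonal hermitian form
\[
  \eta_\alpha:=\qf{c_1,\ldots,c_r}_\s
\]
over $(A,\s)$ satisfying $\eta_\alpha\ox_R\kappa(\alpha)\simeq \qf{a_1,\ldots,a_r}_{\s(\alpha)}$.

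Finally, unwinding Definition~\ref{def:sig_h},
\[
  \sign^\CM_\alpha\eta_\alpha
  =\sign^{\CM_{\bar\alpha}}_{\bar\alpha}(\eta_\alpha\ox_R\kappa(\alpha))
  =\sign^{\CM_{\bar\alpha}}_{\bar\alpha}\qf{a_1,\ldots,a_r}_{\s(\alpha)}
  =N\cdot\sign^{\CM_{\bar\alpha}}_{\bar\alpha}h'\ne 0,
\]
which gives the desired reference form at $\alpha$. The only nontrivial step is the passage from the fibre to a form defined over $(A,\s)$, and this is exactly what Lemma~\ref{lem:diag} is designed to supply; I do not expect any other obstruction.
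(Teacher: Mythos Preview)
Your proof is correct and follows essentially the same approach as the paper: produce a diagonal form with nonzero signature over the fibre $(A(\alpha),\s(\alpha))$, then lift it via Lemma~\ref{lem:diag}. The only minor difference is that the paper invokes \cite[Theorem~6.4]{A-U-Kneb} to obtain a diagonal reference form over the fibre directly, whereas you start from an arbitrary form of nonzero signature and pass to a diagonal one via weak diagonalization; both routes feed into the same lifting step.
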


\begin{proof}
  Let $\alpha \in \Sper R \setminus \Nil[A,\s]$. 
  Then $\bar\alpha \not \in \Nil[A(\alpha), \s(\alpha)]$ by 
  \cite[Lemma~3.7]{A-U-Az-PLG}.
  By 
  \cite[Theorem~6.4]{A-U-Kneb} 
  there is a diagonal
  hermitian form $\vf_\alpha$ over $(A(\alpha), \s(\alpha))$ such that
  $\sign^{\CM_{\bar \alpha}}_{\bar \alpha} \vf_\alpha \not = 0$. Thus, by 
  Lemma~\ref{lem:diag} with $\fp=\Supp(\alpha)$, 
  there is a (diagonal) form $\eta_\alpha$ over $(A,\s)$ such that $\eta_\alpha
  \ox_R \kappa(\alpha) = \vf_\alpha$. Therefore, $\sign^\CM_\alpha \eta_\alpha =
  \sign^{\CM_{\bar \alpha}}_{\bar \alpha} \vf_\alpha \not = 0$.
\end{proof}

\begin{lemma}\label{like-Mahe}
  Let $U \subseteq \Sper R$ be clopen for the constructible topology.  Then
  there is a quadratic form $q$ over $R$ and $k \in \N$ such that 
  $\sign_\bullet q = 0$
  on $U$ and $\sign_\bullet q = 2^k$ on $(\Sper R) \setminus U$.
\end{lemma}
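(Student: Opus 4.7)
The plan is to recognise this lemma as a direct consequence of Mahé's theorem on realising continuous $\Z$-valued functions on the real spectrum as total signatures, as proved in \cite{mahe82}.

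First I would set $V := \Sper R \setminus U$, which is again clopen for the constructible topology. Its characteristic function $\chi_V \colon \Sper R \to \Z$ takes only the values $0$ and $1$ and is locally constant, hence continuous when $\Sper R$ carries the constructible topology and $\Z$ the discrete topology.

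Next I would invoke Mahé's theorem: the cokernel of the total signature homomorphism $\sign_\bullet \colon W(R) \to C(\Sper R, \Z)$ (constructible topology, discrete topology on $\Z$) is a torsion group of exponent a power of $2$. In particular, applying it to $\chi_V$ gives an integer $k \in \N$ and a Witt class in $W(R)$ whose total signature equals $2^k \chi_V$. Since every Witt class is represented by an actual (nonsingular) quadratic form over $R$, we obtain a quadratic form $q$ with $\sign_\bullet q = 2^k \chi_V$, which immediately gives $\sign_\alpha q = 0$ for $\alpha \in U$ and $\sign_\alpha q = 2^k$ for $\alpha \in V$, as required.

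The main substantive ingredient is Mahé's theorem itself, whose proof is nontrivial but already available in the cited reference; the rest of the argument is essentially bookkeeping. An alternative, self-contained approach would build $q$ by induction on the boolean complexity of $U$ as a finite boolean combination of basic opens $\mathring H(r)$, using tensor products of Pfister forms $\qf{1, \pm r}$. The main obstacle in that direct route is that such Pfister forms have signature $1$ (rather than $0$ or $2$) at orderings whose support contains $r$, so one must absorb these spurious ``support contributions'' by taking further tensor powers and exploiting the extra factor of $2^k$; this is essentially a rerun of Mahé's original argument, so citing his theorem is the cleaner option.
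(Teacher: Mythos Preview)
Your proposal has a genuine gap: Mah\'e's theorem in \cite{mahe82} concerns the \emph{Harrison} topology, not the constructible one. The total signature homomorphism out of $W(R)$ --- whose elements are represented by \emph{nonsingular} forms --- lands in the ring of Harrison-continuous functions (cf.\ Proposition~\ref{qf-cont}), and it is the cokernel \emph{there} that Mah\'e shows to be $2$-primary torsion. If $U$ is merely constructible-clopen, then $\chi_V$ need not be Harrison-continuous, and no power $2^k\chi_V$ can ever be the total signature of a nonsingular form. So the step ``apply Mah\'e to $\chi_V$'' fails precisely in the case the lemma is designed to cover. (Your parenthetical ``constructible topology'' in the statement of Mah\'e's theorem is the error; you cannot simply swap topologies.)

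The paper's proof is the elementary induction you sketch and then dismiss, but it is not a rerun of Mah\'e's argument and it does not use Pfister forms $\qf{1,\pm r}$. The key point is that it allows \emph{singular} one-dimensional forms $\qf{a}$ for arbitrary $a\in R$, whose signature is $1$, $0$, or $-1$ according as $a$ lies in $\alpha\setminus(-\alpha)$, $\Supp(\alpha)$, or $(-\alpha)\setminus\alpha$. The ``support contribution'' you worry about is handled by a small polynomial trick: since $-x^2+x+2$ vanishes at $-1$ and equals $2$ at $0$ and $1$, the form $q':=-q\ox q\perp q\perp\qf{1,1}$ with $q=\qf{a}$ has signature $0$ on $\mathring H(a)$ and $2$ on its complement. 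Unions are then handled by tensor product and complements by subtracting from $2^k\x\qf{1}$. This is what actually buys you the constructible version, which is strictly stronger than what Mah\'e's theorem provides.
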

\begin{proof}
  By \cite[Theorem~3.4.5]{KSU}, 
  $U$ is a constructible subset of $\Sper R$, i.e., a
  finite boolean combination of sets of the form $\mathring H(a)$ for $a \in
  R$. We proceed by induction on this boolean combination:
  \begin{enumerate}[a)]   
  \item Let  $U = \mathring H(a)$. Consider $q := \qf{a}$. Then 
    $\sign_\bullet q = 1$ on
      $\mathring H(a)$, $0$ on $\{\alpha \in \Sper R \mid a \in \alpha \cap 
      -\alpha\}$ and $-1$ on $\{\alpha \in \Sper R \mid a \in -\alpha \setminus 
      \alpha\}$. Inspired by the observation that the polynomial $-x^2+x+2$ 
      takes value $0$ at $x=-1$, and value $2$ at $x=0$ and $x=1$, we let
      $q' := -q \ox q \perp q \perp \qf{1,1}$. Then
      $\sign_\bullet q' = 0$ on $\mathring H(a)$ and $\sign_\bullet q'=2$ on
      $(\Sper R) \setminus \mathring H(a)$.
      \medskip

 \item Let $U = V_1 \cup V_2$. By induction there are quadratic forms $q_1, q_2$
      such that $\sign_\bullet q_i$ equals $0$ on $V_i$ and $2^{k_i}$ on 
      $(\Sper R) \sm V_i$ for 
      $i=1,2$. Let
      $q' := q_1 \ox q_2$. Then $\sign_\bullet q'$ equals $0$ on $V_1 \cup V_2$ and $2^{k_1 +
      k_2}$ on $(\Sper R) \setminus (V_1 \cup V_2)$.
      \medskip

\item Let $U = (\Sper R) \setminus V$. By induction there is a quadratic form
      $q$ such that $\sign_\bullet q$ equals $0$ on $V$ and $2^k$ on $(\Sper R) 
      \setminus V$.
      Let $q' := 2^k \x \qf{1} \perp -q$. Then $\sign_\bullet q'$ equals 
      $0$ on $U$ and $2^k$
      on $(\Sper R) \setminus U$.\qedhere
 \end{enumerate}
\end{proof}

\begin{thm}\label{ref-form}
  There exists a reference form $\eta$ for $(A,\s)$ such that
  $|\sign^\CM_\bullet \eta|$ is a constant nonzero function on $\Sper R$.
\end{thm}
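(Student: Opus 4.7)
The plan is to build $\eta$ as an orthogonal sum $\bigperp_j q_j \ox h_j$, where each $h_j$ comes from Lemma~\ref{diag-ref} and each $q_j$ is a quadratic form over $R$ produced via Lemma~\ref{like-Mahe}, ``supported'' on a prescribed clopen piece of $\Sper R \setminus \Nil[A,\s]$. The scalings of the $q_j$ will be chosen so that on every piece the absolute signature takes the same positive value.

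First, to each $\alpha \in \Sper R \setminus \Nil[A,\s]$ attach by Lemma~\ref{diag-ref} a hermitian form $\eta_\alpha$ with $\sign^\CM_\alpha \eta_\alpha \ne 0$. By Corollary~\ref{abs-val-cont}, $|\sign^\CM_\bullet \eta_\alpha|$ is continuous in the constructible topology, and by Lemma~\ref{sign-bounded} it is bounded; hence each level set $\{\beta : |\sign^\CM_\beta \eta_\alpha| = n\}$, $n \in \N$, is constructible clopen. Since $\Sper R \setminus \Nil[A,\s]$ is closed by Theorem~\ref{nil-clopen} and therefore compact for the constructible topology, finitely many such level sets (attached to finitely many $\eta_{\alpha_1}, \ldots, \eta_{\alpha_r}$) already cover it. Intersecting the cells of the induced finite boolean partition of $\Sper R$ with $\Sper R \setminus \Nil[A,\s]$ yields a finite disjoint decomposition $W_1 \sqcup \cdots \sqcup W_m$ by constructible clopen sets, together with an assignment $j \mapsto (h_j, N_j)$ (where $h_j \in \{\eta_{\alpha_1}, \ldots, \eta_{\alpha_r}\}$ and $N_j \in \N$) such that $|\sign^\CM_\beta h_j| = N_j$ throughout $W_j$.

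Next, Lemma~\ref{like-Mahe} applied to $\Sper R \setminus W_j$ yields a quadratic form $\tilde q_j$ over $R$ with $\sign_\bullet \tilde q_j = 2^{k_j}$ on $W_j$ and $0$ elsewhere. Set $N := \mathrm{lcm}(N_1, \ldots, N_m)$, $K := \max_j k_j$, and $c_j := N \cdot 2^{K - k_j} / N_j \in \N$; then $q_j := c_j \x \tilde q_j$ satisfies $\sign_\bullet q_j = N \cdot 2^K / N_j$ on $W_j$ and $0$ off $W_j$. Putting $\eta := \bigperp_{j=1}^m q_j \ox h_j$, Remark~\ref{signprop} gives for every $\beta \in W_{j_0}$:
\[
  \sign^\CM_\beta \eta \;=\; \sum_{j=1}^m \sign_\beta q_j \cdot \sign^\CM_\beta h_j \;=\; \frac{N \cdot 2^K}{N_{j_0}} \cdot \sign^\CM_\beta h_{j_0},
\]
whose absolute value equals $N \cdot 2^K$ since $|\sign^\CM_\beta h_{j_0}| = N_{j_0}$. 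On $\Nil[A,\s]$ every $\CM$-signature vanishes by definition, so $\eta$ is the claimed reference form whose absolute signature is the nonzero constant $N \cdot 2^K$.

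The main obstacle is the bookkeeping in Step~1: one must assemble a finite disjoint clopen partition on which a fixed finite pool of hermitian forms simultaneously controls the signature \emph{and} has constant absolute value. This is what forces us to work in the constructible topology via Corollary~\ref{abs-val-cont}; the Harrison topology would not suffice in general, since the diagonal forms $\eta_\alpha$ produced by Lemma~\ref{diag-ref} need not be nonsingular.
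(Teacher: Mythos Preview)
Your proof is correct and follows essentially the same strategy as the paper's: produce local forms via Lemma~\ref{diag-ref}, pass to a finite clopen partition of $\Sper R \setminus \Nil[A,\s]$ using Corollary~\ref{abs-val-cont} and compactness, then glue with quadratic forms from Lemma~\ref{like-Mahe}. The only cosmetic differences are that the paper equalizes the constants by taking multiple copies of the $\eta_i$ and $q_i$ rather than your lcm/scaling arithmetic, and it does not invoke Lemma~\ref{sign-bounded} (which is harmless but unnecessary, since one only needs the single level set through~$\alpha$).
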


\begin{proof}
  Let $\alpha \in \Sper R \setminus \Nil[A,\s]$. By Lemma~\ref{diag-ref}, there
  exists a hermitian form $\eta_\alpha$ over $(A,\s)$ such that
  $s_\alpha:=|\sign^\CM_\alpha \eta_\alpha| > 0$. By
  Corollary~\ref{abs-val-cont}, the set
  \[
    U_\alpha := |\sign^\CM_\bullet \eta_\alpha|^{-1}(s_\alpha) 
  \]
  is clopen in $\Sper R$ for the constructible topology.
  Therefore, by compactness of $\Sper R \setminus \Nil[A,\s]$ (cf.
  Theorem~\ref{nil-clopen}), there are clopen sets $U_i$ and hermitian forms
  $\eta_i$ over $(A,\s)$, for $i=1, \ldots, k$ (for some $k \in \N$) such that
  $\Sper R \setminus \Nil[A,\s] = U_1 \cup \cdots \cup U_k$ and
  $|\sign^\CM_\bullet \eta_i| =: s_i > 0$ on $U_i$. Up to removing their
  intersections, we can assume that $U_1, \ldots, U_k$ are disjoint. Moreover,
  up to replacing each hermitian form $\eta_i$ by sufficiently many copies of
  itself, we may assume that $s_1 = \cdots = s_k =: s$.

  By Lemma~\ref{like-Mahe}, there are quadratic forms $q_i$ over $R$ such that
  $\sign_\bullet q_i$ equals $2^{k_i}$ on $U_i$ and $0$ on $(\Sper R) \setminus
  U_i$. Again, up to replacing each quadratic form $q_i$ by sufficiently many
  copies of itself, we can assume that $\sign_\bullet q_i$ equals $2^{k}$ on
  $U_i$ and $0$ on $(\Sper R) \setminus U_i$.

  Therefore the hermitian form $\displaystyle\midperp_{i=1}^k q_i \ox \eta_i$
  has the required property, since its signature at any $\alpha \in \Sper R$ is
  $2^k s$.
\end{proof}

\begin{defn}\label{eta-sign}
  Let $\eta$ be a reference form for $(A,\s)$ and let $h$ be a hermitian form
  over $(A,\s)$. For $\alpha \in \Sper R$ we define the \emph{$\eta$-signature
  of $h$ at $\alpha$} by 
  \[\sign^\eta_\alpha h := \sgn (\sign^{\CM}_\alpha \eta) \cdot \sign^{\CM}_\alpha h,\]
  where $\sgn: \Z\to \{-1,1\}$ denotes the sign function.
\end{defn}

\begin{rem}\label{obvob}
  Observe that:
  \begin{enumerate}[{\quad\rm (1)}]
    \item We have $\sign^\eta_\alpha \eta >0$ whenever $\alpha\in \Sper R\sm
    \Nil[A,\s]$, and that $\sign^\eta_\alpha$ is a special case of
    $\sign^{\CM}_\alpha$ since it can be obtained from it for a well-chosen
    Morita equivalence.
  
    \item If $\sign^\CM_\alpha h \not =
    0$, then
    \[\sign^\eta_\alpha h > 0 \Leftrightarrow (\sign^\CM_\alpha \eta 
    \text{ and }
    \sign^\CM_\alpha h \text{ have the same sign}).\]
  \end{enumerate}
\end{rem}

\begin{prop}
  With notation as in Definition~\ref{eta-sign}, the value of 
  $\sign^\eta_\alpha h$
  is independent of the choice of the Morita equivalence used in the computation
  of $\sign^\CM_\alpha h$.
\end{prop}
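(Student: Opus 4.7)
The plan is to split into two cases based on whether $\alpha$ is a nil-ordering, and in the non-trivial case reduce the independence claim to the key observation that changing the Morita equivalence multiplies every $\CM$-signature by the same global sign.

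First, if $\alpha\in \Nil[A,\s]$, then $\sign^\CM_\alpha h=0$ for every hermitian form $h$ over $(A,\s)$ (by definition of $\Nil[A,\s]$), so $\sign^\eta_\alpha h=0$ regardless of the choice of Morita equivalence. So the content of the proposition is really in the case $\alpha\in \Sper R\sm \Nil[A,\s]$, where $\sign^\CM_\alpha \eta\ne 0$ by definition of a reference form.

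In that case, I will invoke the description in \cite[Proposition~3.4]{A-U-Kneb}: any two hermitian Morita equivalences $\CM_{\bar\alpha}$, $\CM'_{\bar\alpha}$ used to compute signatures at $\alpha$ differ by a ``sign constant'' $\delta\in\{-1,1\}$ which depends on the pair of Morita equivalences but \emph{not} on the particular form. Writing $\sign^{\CM'}_\alpha(\,\cdot\,)=\delta\cdot\sign^{\CM}_\alpha(\,\cdot\,)$ for every hermitian form over $(A,\s)$, I apply this simultaneously to $h$ and to $\eta$, which is the point where it matters that the same Morita equivalence is used for both (Remark~\ref{choice-Morita}). Then
\[
  \sgn(\sign^{\CM'}_\alpha \eta)\cdot \sign^{\CM'}_\alpha h
  =\sgn(\delta\cdot \sign^{\CM}_\alpha \eta)\cdot \bigl(\delta\cdot\sign^{\CM}_\alpha h\bigr)
  =\delta^2\cdot \sgn(\sign^{\CM}_\alpha \eta)\cdot \sign^{\CM}_\alpha h,
\]
and $\delta^2=1$ gives the desired equality of $\eta$-signatures.

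I do not expect any real obstacle: the only delicate point is ensuring that the $\delta$ appearing for $h$ is the same as the one for $\eta$, which is precisely the content of \cite[Proposition~3.4]{A-U-Kneb} together with the convention in Remark~\ref{choice-Morita} that the same Morita equivalence is used throughout a given computation. Once this is in hand, the argument is a one-line sign cancellation. I will present the proof in the two cases above, explicitly appealing to Lemma~3.7 of \cite{A-U-Az-PLG} to transport the nil-ordering statement from $\alpha$ to $\bar\alpha$ if needed, and to \cite[Proposition~3.4]{A-U-Kneb} for the behaviour of $\CM$-signatures under a change of Morita equivalence.
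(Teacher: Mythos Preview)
Your proposal is correct and follows essentially the same approach as the paper: both treat $\alpha\in\Nil[A,\s]$ trivially, then in the non-nil case use that two Morita equivalences differ by a global sign $\delta\in\{-1,1\}$ independent of the form, so that $\delta^2=1$ cancels. The only cosmetic difference is that the paper cites \cite[Lemma~3.8]{A-U-Kneb} (which already packages the $\eta$-signature independence statement in the central simple case, after extending scalars to $\kappa(\alpha)$), whereas you unwind this and appeal directly to \cite[Proposition~3.4]{A-U-Kneb} for the sign-$\delta$ behaviour and perform the cancellation by hand.
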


\begin{proof}
  The statement is trivially true if $\alpha\in \Nil[A,\s]$, so we may assume
  that
  $\alpha \in \Sper R \sm \Nil[A,\s]$.
  Let $\CN$ be another Morita equivalence in the definition of 
  $\sign^\eta_\alpha h$. Then, using that the signature is computed by
  first extending scalars to $\kappa(\alpha)$, and then that 
  $(A(\alpha), \s(\alpha))$ is a central simple algebra with involution, 
  we
  obtain by  \cite[Lemma~3.8]{A-U-Kneb} that
  \begin{align*}
    \sgn (\sign^{\CN}_\alpha \eta) \cdot \sign^{\CN}_\alpha h 
        &= \sgn (\sign^{\CN_{\bar\alpha}}_{\bar\alpha} (\eta\ox \kappa(\alpha)))
         \cdot 
          \sign^{\CN_{\bar\alpha}}_{\bar\alpha} (h \ox \kappa(\alpha)) \\
        &= \sgn (\sign^{\CM_{\bar\alpha}}_{\bar\alpha} (\eta\ox \kappa(\alpha))) \cdot 
        \sign^{\CM_{\bar\alpha}}_{\bar\alpha} (h \ox \kappa(\alpha))\\
        &= \sgn (\sign^{\CM}_\alpha \eta) \cdot \sign^{\CM}_\alpha h. \qedhere
  \end{align*}
\end{proof}

\begin{prop}\label{change-rcf}
  Assume we have ring morphisms $f:R\to L$ and $g:R\to N$
  where $L$ and $N$ are real closed fields such that
  $f^{-1}(L_{\geq 0}) = g^{-1}(N_{\geq 0})=:\alpha  \in \Sper R$.
  Then, for any hermitian form $h$
  over $(A,\s)$, we have
  \[\sign^{\eta \ox_f L} (h \ox_f L) = \sign^{\eta \ox_g N} (h \ox_g N)\]
  with respect to the unique orderings on $L$ and $N$.
\end{prop}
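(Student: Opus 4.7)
The plan is to reduce to an invariance statement for $\CM$-signatures of hermitian forms over a central simple algebra with involution under extension between real closed fields, and then apply the fact that the $\eta$-signature does not depend on the chosen Morita equivalence.

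First I would observe that $\ker f = \Supp(\alpha)$: indeed, $f(r)=0$ iff $f(r)\geq 0$ and $-f(r)\geq 0$, which is equivalent to $r\in \alpha\cap -\alpha$. Hence $f$ factors as $R\twoheadrightarrow R/\Supp(\alpha)\hookrightarrow \kappa(\alpha)\hookrightarrow L$, and this last embedding carries $\bar\alpha$ to the restriction of the unique ordering of $L$. By uniqueness of the real closure of an ordered field, $L$ contains a canonical ordered subfield isomorphic to $k(\alpha)$; the same argument works for $g$ and $N$. Consequently $h\ox_f L$ and $h\ox_g N$ arise from $h\ox_R k(\alpha)$ by further scalar extension along ordered field embeddings $k(\alpha)\hookrightarrow L$ and $k(\alpha)\hookrightarrow N$, and likewise for $\eta$.

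Next I would fix a Morita equivalence $\CM$ at the level of $(A_\alpha,\s_\alpha)$ as in Definition~\ref{def:sig_h}, and extend it by scalar extension to Morita equivalences $\CM_L$ over $(A\ox_f L,\s\ox\id)$ and $\CM_N$ over $(A\ox_g N,\s\ox\id)$. The key input is that if $K\subseteq K'$ are real closed fields with the ordering of $K'$ restricting to that of $K$, and $(B,\tau)$ is a central simple $K$-algebra with involution, then for any hermitian form $h'$ over $(B,\tau)$ the $\CM$-signature is preserved under extension to $(B\ox_K K',\tau\ox\id)$ along the extended Morita equivalence. This is obtained by Morita reducing to the diagonal case and invoking Sylvester's law for quadratic forms over real closed fields, in which diagonal entries retain their sign upon enlarging the ordered base field (this is essentially the mechanism already used in \cite[Lemma~3.8]{A-U-Kneb}). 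Applying this at $K=k(\alpha)$, with $K'=L$ and $K'=N$ respectively, yields
\[
\sign^{\CM_L}(h\ox_f L) \;=\; \sign^{\CM}_\alpha h \;=\; \sign^{\CM_N}(h\ox_g N),
\]
and the analogous identities with $\eta$ in place of $h$.

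Finally, by the preceding proposition, the $\eta$-signature does not depend on the choice of Morita equivalence, so we may use $\CM_L$ to compute $\sign^{\eta\ox_f L}(h\ox_f L)$ and $\CM_N$ to compute $\sign^{\eta\ox_g N}(h\ox_g N)$. The two $\CM$-signatures of $\eta$ then have the same sign, the two $\CM$-signatures of $h$ are equal, and the two $\eta$-signatures coincide; the case $\alpha\in\Nil[A,\s]$ is trivial since both sides vanish by Remark~\ref{cases-nil}. The main obstacle is the real-closed-extension invariance of the $\CM$-signature in the central simple case: once the Morita equivalence is propagated from $k(\alpha)$ to $L$ and $N$ the verification is routine, but the bookkeeping of Morita equivalences under scalar extension and the need to absorb the sign ambiguity into the $\sgn(\sign^\CM \eta)$ factor is what makes the argument go through.
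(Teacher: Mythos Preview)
Your proposal is correct and follows essentially the same route as the paper: factor $f$ and $g$ through $\kappa(\alpha)$ (and then the real closure), and use that the $\eta$-signature is invariant under ordered extensions into real closed fields. The only difference is packaging: the paper stops at $\kappa(\alpha)$ and invokes \cite[Theorem~2.9]{A-U-sign-pc} as a black box for the invariance step, whereas you push through to $k(\alpha)$, extend the Morita equivalence by scalar extension, and argue the real-closed invariance of the $\CM$-signature directly via Sylvester. Your version is more self-contained; the paper's is shorter because the work is already done in the cited reference.
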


\begin{proof}
  Since
  $\Supp (\alpha) \subseteq \Ker f$, $f$ induces a map $\bar f : \kappa(\alpha)
  \rightarrow L$ and, similarly, $g$ induces a map $\bar g : \kappa(\alpha)
  \rightarrow N$, so that  both triangles in the following diagram commute:
  \[\xymatrix{
    R \ar[dr]^{\rho_\alpha} \ar@/^1pc/[drr]^f \ar@/_1pc/[ddr]_g & & \\
      & \kappa(\alpha) \ar[r]^{\bar f} \ar[d]_{\bar g} & L \\
      & N & 
    }\]  
  By definition of signature, we have $\sign^\eta_\alpha h = \sign^{\eta \ox
  \kappa(\alpha)}_{\bar \alpha} (h \ox_R \kappa(\alpha))$. Applying 
  \cite[Theorem~2.9]{A-U-sign-pc}, we obtain
  \[\sign^{\eta \ox \kappa(\alpha)}_{\bar \alpha} (h \ox_R \kappa(\alpha)) =
  \sign^{\eta \ox \kappa(\alpha) \ox_{\bar f} L} \bigl((h \ox_R \kappa(\alpha))
  \ox_{\bar f} L\bigr) = \sign^{\eta \ox_f L} (h \ox_f L),\]
  and
  \[\sign^{\eta \ox \kappa(\alpha)}_{\bar \alpha} (h \ox_R \kappa(\alpha)) =
  \sign^{\eta \ox \kappa(\alpha) \ox_{\bar g} N} \bigl((h \ox_R \kappa(\alpha))
  \ox_{\bar g} N\bigr) = \sign^{\eta \ox_g N} (h \ox_g N),\]
  proving the result.
\end{proof}

\section{Continuity of the total signature map}
\label{sec-cont}

\begin{thm}\label{sign-cont}
  Let $\eta$ be a reference form for $(A,\s)$. Then for any hermitian form $h$
  over $(A,\s)$ the total signature map $\sign^\eta_\bullet h : \Sper R
  \rightarrow \Z$ is continuous with respect to the constructible topology
  on $\Sper R$ and the discrete topology on $\Z$. 

  If $\eta$ and $h$ are nonsingular, then the total signature map
  $\sign^\eta_\bullet h : \Sper R \rightarrow \Z$ is continuous 
  with respect to the Harrison topology on $\Sper R$ and the discrete topology 
  on $\Z$.
\end{thm}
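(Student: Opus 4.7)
The plan is to express $\sign^\eta_\alpha h$ as a product of two factors whose continuity is already known: its absolute value $|\sign^\CM_\bullet h|$, which is continuous by Corollary~\ref{abs-val-cont}, and a sign factor that can be read off from the signature of the quadratic-type form $h*\eta$, continuous by Proposition~\ref{pizza}.

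First I would combine Definition~\ref{eta-sign} with Theorem~\ref{sig-mult} to establish the key identity
\[
\sign^\eta_\alpha h = \sgn\bigl(\sign_\alpha(h * \eta)\bigr) \cdot |\sign^\CM_\alpha h|
\]
for all $\alpha\in \Sper R$, with the convention $\sgn(0)=0$. On $\Sper R \sm \Nil[A,\s]$ the reference-form hypothesis gives $\sign^\CM_\alpha \eta \ne 0$, and Theorem~\ref{sig-mult} together with $\lambda_\alpha^2>0$ yields $\sgn(\sign_\alpha(h*\eta))=\sgn(\sign^\CM_\alpha h)\cdot \sgn(\sign^\CM_\alpha \eta)$, so the identity follows directly from Definition~\ref{eta-sign}; on $\Nil[A,\s]$ both sides vanish.

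Next I would introduce $U:=\{\alpha\in\Sper R\mid |\sign^\CM_\alpha h|>0\}$. By Lemma~\ref{sign-bounded} the function $|\sign^\CM_\bullet h|$ takes only finitely many values, and by Corollary~\ref{abs-val-cont} it is continuous for the constructible topology (and for the Harrison topology if $h$ is nonsingular). Therefore $U$ is a finite union of clopen level sets and is itself clopen in the relevant topology. On $\Sper R\sm U$ the function $\sign^\eta_\bullet h$ is identically zero, so continuity there is automatic, and it remains to treat $U$.

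On $U$, both $\sign^\CM_\alpha h$ and $\sign^\CM_\alpha \eta$ are nonzero (the latter because $U\subseteq \Sper R\sm\Nil[A,\s]$), so $\sign_\alpha(h*\eta)\ne 0$. Since $h*\eta$ is a hermitian form over $(Z(A),\iota)$, its total signature is continuous for the constructible topology by Proposition~\ref{pizza}, and for the Harrison topology as soon as $h*\eta$ is nonsingular --- which occurs whenever $h$ and $\eta$ are nonsingular, since the $*$-pairing preserves nonsingularity (Section~\ref{secstarmult}). Composing this nowhere-vanishing $\Z$-valued continuous function on $U$ with $\sgn$ produces a continuous $\{-1,+1\}$-valued map, and multiplying by the continuous function $|\sign^\CM_\bullet h|$ gives continuity of $\sign^\eta_\bullet h$ on $U$, hence on all of $\Sper R$ after gluing along the clopen decomposition. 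The main obstacle is conceptual rather than technical: individually the signs of $\sign^\CM_\bullet h$ and $\sign^\CM_\bullet \eta$ depend on non-canonical Morita choices and so are not topologically tractable, but their product is Morita-invariant and, via Theorem~\ref{sig-mult}, realises the signature of a genuine form over $(Z(A),\iota)$, whose sign inherits continuity from the classical quadratic-form setting.
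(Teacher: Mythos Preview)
Your proof is correct and takes a genuinely different route from the paper's. The paper partitions $\Sper R\setminus\Nil[A,\s]$ into the level sets $O_i$ of $|\sign^\CM_\bullet\eta|$ and, on each $O_i$ with value $k_i$, characterizes the level sets of $\sign^\eta_\bullet h$ via the equivalence $\sign^\eta_\alpha h=r \Leftrightarrow |\sign^\CM_\alpha(k_i\times h - r\times\eta)|=0$, then invokes Corollary~\ref{abs-val-cont} for the auxiliary form $k_i\times h - r\times\eta$. Your argument instead exhibits $\sign^\eta_\bullet h$ explicitly as the product $\sgn(\sign_\bullet(h*\eta))\cdot|\sign^\CM_\bullet h|$, and reads off continuity of each factor directly from Proposition~\ref{pizza} and Corollary~\ref{abs-val-cont}. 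Your approach is more transparent and avoids the infinite family of auxiliary forms indexed by $r\in\Z$; it also makes the role of the $*$-pairing with $\eta$ (rather than just with $h$ itself, as in Corollary~\ref{abs-val-cont}) explicit. The paper's approach, on the other hand, stays entirely within the framework of $|\sign^\CM_\bullet|$ of hermitian forms over $(A,\s)$ and never needs to invoke Proposition~\ref{pizza} separately. Two minor remarks: your convention $\sgn(0)=0$ conflicts with the paper's $\sgn:\Z\to\{-1,1\}$, but this is harmless since the other factor vanishes there; and the appeal to Lemma~\ref{sign-bounded} is not really needed, since $U=|\sign^\CM_\bullet h|^{-1}(\Z_{>0})$ is automatically clopen by continuity alone.
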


\begin{proof}
  We prove the first statement (for the constructible topology), the same proof
  \emph{mutatis mutandis}  gives the second one.

  By definition of $\Nil[A,\s]$ and since $\Nil[A,\s]$ is clopen, cf.
  Theorem~\ref{nil-clopen}, it suffices to show that the map
  $\sign^\eta_\bullet h$ is continuous on $\Sper R \setminus \Nil[A,\s]$. The
  map $\sign^\eta_\bullet \eta$ only takes a finite number of values on $\Sper
  R \setminus \Nil[A,\s]$ (see Lemma~\ref{sign-bounded}), say $k_1, \ldots,
  k_\ell$, which are all positive by Remark~\ref{obvob}(1).
  Let 
  \begin{align*}
    O_i &:= \{\alpha \in (\Sper R)
      \setminus \Nil[A,\s] \mid \sign^\eta_\alpha \eta = k_i\}\\
      &\phantom{:}= \{\alpha \in
        (\Sper R) \setminus \Nil[A,\s] \mid |\sign^{\CM}_\alpha \eta| 
        = k_i\}.
  \end{align*}
  Then 
  $O_i$ is open
  by Corollary~\ref{abs-val-cont} and $(\Sper R) \setminus \Nil[A,\s]$ 
  is
  the disjoint union of the sets $O_1, \ldots, O_\ell$. Therefore it suffices to
  show that the map $\sign^\eta_\bullet h$ is continuous from $O_i$ to $\Z$. 
  But, for $\alpha \in O_i$ and $r\in\Z$, we have
  \begin{align*}
    \sign^\eta_\alpha h = r &\Leftrightarrow k_i \sign^\eta_\alpha h = k_i r \\
      &\Leftrightarrow \sign^\eta_\alpha (k_i \times h) = \sign^\eta_\alpha(r \times 
      \eta) \\
      &\Leftrightarrow \sign^\eta_\alpha (k_i \times h - r \times \eta) = 0 \\
      &\Leftrightarrow |\sign^{\CM}_\alpha (k_i \times h - r \times \eta)| = 0,
  \end{align*}
  so that
  \[(\sign^\eta_\bullet h)^{-1}(r) \cap O_i = \{\alpha \in O_i  \mid
  |\sign^{\CM}_\alpha (k_i \times h - r \times \eta)| = 0\},\]
  which is open by Corollary~\ref{abs-val-cont}.
\end{proof}

\begin{rem}
  At the moment we do not know if it is always possible to find a reference form
  $\eta$ such that $\sign_\bullet^\eta h$ is continuous for the Harrison
  topology, for all nonsingular hermitian forms $h$ over $(A,\s)$ (it is
  certainly possible to do this in some special cases).
  The following example shows that it is possible to have a reference form 
  $\eta$
  such that $|\sign_\bullet^\CM \eta| = k > 0$ on $\Sper R$, and a nonsingular
  hermitian form $h$ on $(A,\s)$ such that $\sign_\bullet^\eta h$ is not
  continuous for the Harrison topology.
\end{rem}

\begin{exa}
  Let $h$ be a nonsingular hermitian form over $(A,\s)$. We define
  \begin{align*}
    \{h = 0\} &:=\{\alpha \in\Sper R \mid \sign^\CM_\alpha h = 0\}, 
    \text{ and}\\
    \{h \not = 0\} &:=\{\alpha \in\Sper R \mid \sign^\CM_\alpha h 
    \not =0\}.
  \end{align*}
  Since $h$ is nonsingular, both $\{h = 0\}$ and $\{h \not = 0\}$ are clopen for
  the Harrison topology on $\Sper R$, cf. Corollary~\ref{abs-val-cont}.
  
  Let $U$ be clopen for the constructible topology such that $U \cap \{h \not =
  0\}$ is not clopen for the Harrison topology. (For instance, take $(A,\s) =
  (M_n(R),t)$ and $h = \qf{1}_\s$. Then $|\sign^\CM_\alpha h|=n$ for every
  $\alpha\in \Sper R$, and we can take for $U$ any subset of $\Sper R$ that is
  clopen for the constructible topology, but not for the Harrison topology.)
  
  We construct a reference form $\eta$ such that $|\sign^\CM_\bullet \eta| = k
  > 0$ on $\Sper R$, but $\sign^\eta_\bullet h$ is not continuous for the
  Harrison topology.

  Let $k_1, \ldots, k_r$ be the non-zero values taken by $|\sign^\CM_\bullet
  h|$ on $\Sper R$, and $O_i := |\sign^\CM_\bullet h|^{-1}(k_i)$. Let $k := k_1
  \cdots k_r$. By \cite[Theorem~3.2]{mahe82} there are $s\in \N$ and
  quadratic forms $q_1,\ldots, q_r$ over $R$ such that
  $\sign_\bullet q_i = 2^s k/k_i$ on $O_i$ and $0$ elsewhere. 
  If we define $q_0 := q_1 \perp \ldots \perp q_r$ we have
  \[\sign_\bullet q_0 > 0 \text{ and }
  |\sign^\CM_\bullet (q_0 \ox h)| =2^s k \text{ on } \{h \not = 0\}.\]

  By Lemma~\ref{like-Mahe}, there is a quadratic form $q$ over $R$ such that
  $\sign_\bullet q = 2^\ell$ on $U$ and $-2^\ell$ on $(\Sper R) \setminus U$.
  Then $|\sign^\CM_\bullet (q \ox q_0 \ox h)|=k2^{s+\ell}$ on $\{h \not = 0\}$. 
  Let
  $\eta_0$ be a hermitian form over $(A,\s)$ such that $|\sign^\CM_\bullet
  \eta_0| = c>0$ on $\{h=0\}$ and $0$ on $\{h \not = 0\}$ (it can be obtained
  by multiplying the form obtained in Theorem~\ref{ref-form} by a suitable
  quadratic form as in Lemma~\ref{like-Mahe}). Defining
  \[\eta := c\x q \ox q_0 \ox h \perp k2^{s+\ell}\x \eta_0,\]
  we have $|\sign^\CM_\bullet \eta| = ck2^{s+\ell}$ on $\Sper R$. In particular
  $\eta$ is a reference form for $(A,\s)$.
  \smallskip
  
  \noindent\emph{Claim:} $\sgn(\sign^\eta_\bullet h)$ is not continuous for the Harrison
  topology. In particular $\sign^\eta_\bullet h$ is not continuous for the
  Harrison topology.
  \smallskip
   
  \noindent\emph{Proof of the claim:}
  If $\alpha \in \{h \not = 0\} \cap U$, we have
      \[\sign^\CM_\alpha \eta = c(\sign_\alpha q) (\sign_\alpha q_0) 
      (\sign^\CM_\alpha h) =
      c'\sign^\CM_\alpha h,\]
      for some $c'>0$, so that $\sgn(\sign^\eta_\alpha h) = 1$ by
      Remark~\ref{obvob}(2).
  
 If $\alpha \in \{h \not = 0\} \setminus U$, we have
      \[\sign^\CM_\alpha \eta = c(\sign_\alpha q) (\sign_\alpha q_0) 
      (\sign^\CM_\alpha h) =
      - c'\sign^\CM_\alpha h,\]
      for some $c'>0$,
      so that $\sgn(\sign^\eta_\alpha h) = -1$  by Remark~\ref{obvob}(2).

  Therefore $(\sgn(\sign^\eta_\bullet h))^{-1}(1) \cap \{h \not = 0\} = U
  \cap \{h \not = 0\}$, which is not clopen in the Harrison topology. This
  proves the claim and completes the example.
\end{exa}

We clarify some properties of the $\eta$-signature. For central simple
algebras with involution over a field these already appeared as 
\cite[Proposition~3.3]{A-U-prime} in a slightly different context.
Proposition~\ref{h_zero} is stated for the constructible topology on $\Sper R$, but the statements can easily be formulated
for the Harrison topology in cases where the total signature maps are
continuous for this topology.

\begin{prop}\label{h_zero}
  Let $\eta$ be a reference form for $(A,\s)$. The topology on $\Sper R$ is
  always the constructible topology in this proposition.
    \begin{enumerate}[{\quad \rm(1)}]
    \item Let $h_0$ be such that $\sign^\eta_\bullet h_0>0$ on $\Sper R \sm 
    \Nil[A,\s]$. Then  $\sign^\eta = \sign^{h_0}$.

    \item Let $f$ be a continuous map from $\Sper R$ to $\{-1,1\}$ (with the 
    discrete topology).
     There exists a reference form $h_f$ over $(A,\s)$ such that
    $\sign^{h_f} = f \cdot \sign^\eta$.

    \item Let $\eta'$ be another  reference form for $(A,\s)$. There exists a 
    continuous map $f:\Sper R \to \{-1,1\}$ such that
    $\sign^{\eta'} = f \cdot \sign^\eta$.
    \end{enumerate}
\end{prop}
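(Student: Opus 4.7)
For (1), the key observation is that the hypothesis $\sign^\eta_\bullet h_0 > 0$ on $\Sper R \sm \Nil[A,\s]$ already implies that $h_0$ is itself a reference form, since $\sign^\eta_\alpha h_0 = \sgn(\sign^\CM_\alpha \eta)\cdot \sign^\CM_\alpha h_0$ and so $\sign^\CM_\alpha h_0 \ne 0$ wherever $\sign^\CM_\alpha \eta \ne 0$. Unpacking this identity further, positivity of $\sign^\eta_\alpha h_0$ on $\Sper R \sm \Nil[A,\s]$ forces $\sgn(\sign^\CM_\alpha \eta) = \sgn(\sign^\CM_\alpha h_0)$ on that set. Therefore for every hermitian form $h$ over $(A,\s)$ and every $\alpha \in \Sper R\sm\Nil[A,\s]$ we have $\sign^{h_0}_\alpha h = \sgn(\sign^\CM_\alpha h_0)\cdot\sign^\CM_\alpha h = \sgn(\sign^\CM_\alpha \eta)\cdot\sign^\CM_\alpha h = \sign^\eta_\alpha h$, and on $\Nil[A,\s]$ both sides vanish.

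For (2), I would translate $f$ into a quadratic form. The set $U := f^{-1}(-1)$ is clopen in the constructible topology because $f$ is continuous into the discrete space $\{-1,1\}$. Applying Lemma~\ref{like-Mahe} to $U$ and to $\Sper R \sm U$ separately, I obtain quadratic forms $q_1,q_2$ over $R$ with $\sign_\bullet q_1 = 2^{k_1}$ on $\Sper R \sm U$ and $0$ on $U$, and $\sign_\bullet q_2 = 2^{k_2}$ on $U$ and $0$ on $\Sper R \sm U$. Scaling to a common exponent $k$ by orthogonal sums, I form $q := q_1 \perp (-1)\ox q_2$; then $\sign_\bullet q = 2^k f$ on all of $\Sper R$, so in particular $\sgn(\sign_\bullet q) = f$. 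Setting $h_f := q \ox \eta$ and using Remark~\ref{signprop} to get $\sign^\CM_\alpha h_f = \sign_\alpha q \cdot \sign^\CM_\alpha \eta$, I conclude that $h_f$ is a reference form and that $\sgn(\sign^\CM_\alpha h_f) = f(\alpha)\cdot\sgn(\sign^\CM_\alpha \eta)$ on $\Sper R\sm\Nil[A,\s]$, which rewrites as $\sign^{h_f} = f \cdot \sign^\eta$.

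For (3), the natural candidate is
\[
f(\alpha) :=
\begin{cases}
\sgn(\sign^\eta_\alpha \eta') & \text{if } \alpha \in \Sper R \sm \Nil[A,\s], \\
1 & \text{if } \alpha \in \Nil[A,\s].
\end{cases}
\]
On $\Sper R\sm\Nil[A,\s]$ this is well defined because both $\eta$ and $\eta'$ are reference forms. A direct computation from Definition~\ref{eta-sign} gives, for every $\alpha \in \Sper R\sm\Nil[A,\s]$ and every hermitian form $h$,
\[
f(\alpha)\sign^\eta_\alpha h = \sgn(\sign^\CM_\alpha \eta)\sgn(\sign^\CM_\alpha \eta')\sgn(\sign^\CM_\alpha \eta)\sign^\CM_\alpha h = \sign^{\eta'}_\alpha h,
\]
using $\sgn(\sign^\CM_\alpha \eta)^2 = 1$; on $\Nil[A,\s]$ both $\eta$- and $\eta'$-signatures vanish identically so the equality is trivial. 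Continuity of $f$ reduces to Theorem~\ref{sign-cont}: the map $\sign^\eta_\bullet \eta'$ is continuous into $\Z$, so its composition with $\sgn$ is continuous on $\Sper R \sm \Nil[A,\s]$, and since $\Nil[A,\s]$ is clopen by Theorem~\ref{nil-clopen}, $f$ is continuous on all of $\Sper R$.

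The only place that requires more than unwinding definitions is the construction in (2), where one has to massage Lemma~\ref{like-Mahe} (which only produces a ``one-sided'' signature $\{0,2^k\}$) into a two-sided signature matching $f$; the trick of combining the lemma for $U$ and for its complement with a sign twist takes care of this.
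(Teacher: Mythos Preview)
Your proof is correct and follows essentially the same route as the paper. The only cosmetic differences are: in (2) you spell out explicitly how to combine two applications of Lemma~\ref{like-Mahe} into a single form $q$ with $\sign_\bullet q = 2^k f$, whereas the paper simply asserts the existence of such a $q$ by reference to the lemma; and in (3) you define $f(\alpha) = \sgn(\sign^\eta_\alpha \eta')$ while the paper writes $f(\alpha) = \sgn(\sign^{\eta'}_\alpha \eta)$, but these coincide on $\Sper R \sm \Nil[A,\s]$ since both equal $\sgn(\sign^\CM_\alpha \eta)\cdot\sgn(\sign^\CM_\alpha \eta')$.
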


\begin{proof} 
(1) Let $h$ be a hermitian form over $(A,\s)$.
  Considering that for a given $\alpha \in \Sper R$, $\sign^\eta_\alpha$ is a 
  special case of
  $\sign^{\CM}_\alpha$, it follows from Definition~\ref{eta-sign} that
  \[\sign^{h_0}_\alpha h = \sgn(\sign^\eta_\alpha h_0)\cdot \sign^\eta_\alpha h=
      \begin{cases}
        \sign^\eta_\alpha h & \text{if } \alpha\in \Sper R \sm \Nil[A,\s],\\
        0 & \text{if } \alpha\in \Nil[A,\s].
      \end{cases}\]

(2) 
By Lemma~\ref{like-Mahe} there exists a quadratic form $q$ over $R$
such that $\sign q > 0$ on $f^{-1}(1)$ and
$\sign q < 0$ on $f^{-1}(-1)$. 
Let $h_f = q\ox \eta$.   Then for any hermitian form $h$ over $(A,\s)$ we have
(since $\sign^\eta_\alpha h_f = \sign_\alpha q \cdot \sign^\eta_\alpha \eta$,
cf. Remark~\ref{signprop}),
\begin{align*}
    \sign^{h_f}_\alpha h & = \sgn(\sign^\eta_\alpha h_f)\cdot 
    \sign^\eta_\alpha h\\ &=
      \begin{cases}
        \sign^\eta_\alpha h & \text{if } \sign^\eta_\alpha h_f > 0, 
        \text{ i.e., if } f(\alpha) = 1\\
        -\sign^\eta_\alpha h & \text{if } \sign^\eta_\alpha h_f < 0, 
        \text{ i.e., if } f(\alpha) = -1\\
      \end{cases}\\
    & = f(\alpha) \sign^\eta_\alpha h.
\end{align*}

(3) Let $\alpha\in \Sper R \sm \Nil[A,\s]$. By \cite[Proposition~3.4]{A-U-Kneb} 
there exists $\delta \in \{-1,1\}$ such that
$\sign^\eta_\alpha \eta= \delta \sign^{\eta'}_\alpha \eta$. Thus, 
$\delta = \sgn\bigl(\sign^{\eta'}_\alpha \eta\bigr)$, 
since $\sign^\eta_\alpha \eta >0$.
Define $f: \Sper R \to \{-1,1\}$ by
 \[f(\alpha) =
      \begin{cases}
        \sgn\bigl(\sign^{\eta'}_\alpha \eta\bigr) & \text{if } \alpha\in 
            \Sper R \sm \Nil[A,\s],\\
        1 & \text{if } \alpha\in \Nil[A,\s].
      \end{cases}\]
Then $f$ is continuous and $\sign^\eta = f\cdot \sign^{\eta'}$.      
\end{proof}

For $\alpha \in \Sper R$ we consider
\begin{align*}
  m_\alpha(A,\s) &:= \max \{\sign^\CM_\alpha \qf{a}_\s \mid a \in \Sym(A^\x,\s) 
  \}\\
  &\phantom{:}= \max \{\sign^\eta_\alpha \qf{a}_\s \mid a \in \Sym(A^\x,\s) 
  \}.
\end{align*}
This function was of crucial importance in \cite{A-U-Az-PLG} to establish the
links between elements of maximal signature and positive semidefinite forms,
cf. \cite[Proposition~5.4, Theorem~6.1]{A-U-Az-PLG}. However, we could only
determine its value for maximal elements of $\Sper R$, cf.
\cite[Corollary~3.19]{A-U-Az-PLG}. The following results clarify this.

\begin{lemma}\label{cont-spear}
  Let $f : \Sper R \rightarrow \Z$ be continuous, 
  where $\Sper R$ is equipped with the Harrison topology.   
  Let $\alpha \subseteq
  \beta \in \Sper R$. Then $f(\alpha) = f(\beta)$.
\end{lemma}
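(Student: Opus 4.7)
The plan is short. Since $\Z$ carries the discrete topology, the singleton $\{f(\beta)\}$ is open, and therefore so is $V := f^{-1}(\{f(\beta)\})$. Thus $V$ is an open neighbourhood of $\beta$ in the Harrison topology on $\Sper R$. If I can show that every Harrison-open neighbourhood of $\beta$ also contains $\alpha$---equivalently, that $\beta$ lies in the closure $\overline{\{\alpha\}}$---then $\alpha\in V$ and consequently $f(\alpha)=f(\beta)$, which is the desired conclusion.

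The remaining content of the lemma is therefore the standard fact that $\alpha\subseteq\beta$ (as prime cones of $R$) implies $\beta\in \overline{\{\alpha\}}$. I would verify this directly on the subbasis: pick $r\in R$ with $\beta\in\mathring H(r)$, i.e., $r\in \beta\setminus -\beta$. From $\alpha\subseteq\beta$ one gets $-r\notin\beta\Rightarrow -r\notin\alpha$; and since $\alpha$ is a prime cone (so $R=\alpha\cup-\alpha$), this forces $r\in\alpha$. Hence $r\in\alpha\setminus-\alpha$, i.e., $\alpha\in\mathring H(r)$. Finite intersections of such subbasic opens then also contain $\alpha$, proving the specialization assertion.

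There is no genuine obstacle here: the argument is purely formal topology, using nothing beyond the definition of the Harrison subbasis and the prime-cone property $R=\alpha\cup-\alpha$. In fact, this is the familiar observation that clopen (indeed, merely open) subsets of $\Sper R$ for the Harrison topology are stable under specialization in the partial order given by inclusion of prime cones, so the lemma is essentially a direct specialization of that principle to fibers of $f$.
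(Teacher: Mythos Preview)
Your proof is correct and follows essentially the same approach as the paper's: both arguments use that $V=f^{-1}(\{f(\beta)\})$ is open and that $\beta\in\overline{\{\alpha\}}$ forces $\alpha\in V$. The only difference is that the paper cites \cite[Definition~3.3.6 and Proposition~3.3.7]{KSU} for the equivalence $\alpha\subseteq\beta \Leftrightarrow \beta\in\overline{\{\alpha\}}$, whereas you verify it directly on the Harrison subbasis; this is a cosmetic difference, not a genuinely different route.
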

\begin{proof}
  Recall that $\alpha\subseteq \beta$ is equivalent to $\beta \in 
  \overline{\{\alpha\}}$, cf. 
  \cite[Definition~3.3.6 and Proposition~3.3.7]{KSU}.
  Let $k:=f(\beta)$ and consider the open set  
  $U=f^{-1}(k)$. Since $\beta\in U$, we have $\alpha\in U$. Hence,
  $f(\alpha) =k$.
\end{proof}

\begin{prop}\label{cst-spear}
  Let $\eta$ be a reference form for $(A,\s)$ and assume that $\eta$ is
  nonsingular. Let $\alpha\subseteq \beta \in \Sper R$ and let $h$ be a
  nonsingular hermitian form over $(A,\s)$. Then $\sign^\eta_\alpha h =
  \sign^\eta_\beta h$.
\end{prop}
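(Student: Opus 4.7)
The plan is to combine the continuity statement of Theorem~\ref{sign-cont} with the specialization property of Lemma~\ref{cont-spear}. Concretely, since both $\eta$ and $h$ are assumed nonsingular, the second part of Theorem~\ref{sign-cont} applies and tells us that the total signature map
\[
\sign^\eta_\bullet h : \Sper R \rightarrow \Z
\]
is continuous when $\Sper R$ is equipped with the Harrison topology and $\Z$ with the discrete topology.

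Once this is in hand, Lemma~\ref{cont-spear} applies verbatim to the function $f := \sign^\eta_\bullet h$. Since $\alpha \subseteq \beta$ means $\beta \in \overline{\{\alpha\}}$ in the Harrison topology, every open neighborhood of $\beta$ also contains $\alpha$. Taking the open set $U := (\sign^\eta_\bullet h)^{-1}(\sign^\eta_\beta h)$, which is open by continuity and contains $\beta$, we conclude that $\alpha \in U$, i.e. $\sign^\eta_\alpha h = \sign^\eta_\beta h$.

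There is essentially no obstacle: the only hypotheses required to invoke Theorem~\ref{sign-cont} in its Harrison-continuous form are the nonsingularity of both $\eta$ and $h$, and these are given. The argument is thus a two-line application of the two cited results. The substantive content of the proposition already lies in Theorem~\ref{sign-cont} (which in turn relies on the clopenness of $\Nil[A,\s]$ and the structural results of Section~\ref{secstarmult}), and in the general topological fact recorded in Lemma~\ref{cont-spear}.
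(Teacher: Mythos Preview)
Your proof is correct and matches the paper's own argument essentially verbatim: the paper simply writes that the result follows from Lemma~\ref{cont-spear} and Theorem~\ref{sign-cont}, which is precisely the combination you invoke.
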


\begin{proof}
  This follows from Lemma~\ref{cont-spear} and Theorem~\ref{sign-cont}.
\end{proof}

\begin{cor}
  Let $\eta$ be a reference form for $(A,\s)$ and assume that $\eta$ is
  nonsingular.
  Let $\alpha\subseteq \beta \in \Sper R$. Then $m_\alpha(A,\s) = 
  m_\beta(A,\s)$.    
\end{cor}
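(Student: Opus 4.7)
The plan is to deduce the equality directly from Proposition~\ref{cst-spear} applied individually to each rank-one diagonal form $\qf{a}_\s$, and then take the maximum on both sides of the resulting identity. First I would observe that, using the equivalent description
\[m_\gamma(A,\s) = \max \{\sign^\eta_\gamma \qf{a}_\s \mid a \in \Sym(A^\x,\s) \}\]
recorded just before Lemma~\ref{cont-spear}, the set over which the maximum is taken is nonempty (it contains the value at $a = 1_A$) and bounded: Lemma~\ref{sign-bounded} applied to the rank-one free module $M = A$ gives a bound on $|\sign^\CM_\gamma \qf{a}_\s|$ that depends only on $A$ and $M$, hence is uniform in both $\gamma$ and $a$. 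In particular the maximum is attained and $m_\gamma(A,\s) \in \Z$ is well defined.

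Next, I would note that for every $a \in \Sym(A^\x,\s)$ the diagonal hermitian form $\qf{a}_\s$ is nonsingular, simply because $a$ is invertible in $A$. Combined with the standing assumption that the reference form $\eta$ is nonsingular, this places us squarely in the hypotheses of Proposition~\ref{cst-spear}, which yields
\[\sign^\eta_\alpha \qf{a}_\s = \sign^\eta_\beta \qf{a}_\s\]
for every $a \in \Sym(A^\x,\s)$. Therefore the two sets
\[\{\sign^\eta_\alpha \qf{a}_\s \mid a \in \Sym(A^\x,\s)\} \text{ and } \{\sign^\eta_\beta \qf{a}_\s \mid a \in \Sym(A^\x,\s)\}\]
coincide as subsets of $\Z$, and in particular their maxima agree, giving $m_\alpha(A,\s) = m_\beta(A,\s)$.

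The only substantive point to verify is that each $\qf{a}_\s$ with $a \in \Sym(A^\x,\s)$ is nonsingular; once this is observed, the corollary is a direct specialization of Proposition~\ref{cst-spear} to the parametric family of rank-one diagonal forms, so no genuine obstacle remains.
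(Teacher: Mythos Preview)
Your proposal is correct and follows essentially the same approach as the paper: apply Proposition~\ref{cst-spear} to each nonsingular form $\qf{a}_\s$ with $a \in \Sym(A^\times,\s)$ and then take the maximum. Your write-up is in fact more careful than the paper's, since you explicitly justify nonsingularity of $\qf{a}_\s$ and well-definedness of the maximum via Lemma~\ref{sign-bounded}, whereas the paper leaves these points implicit.
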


\begin{proof}
  Let $a \in \Sym(A^\x,\s)$. By Proposition~\ref{cst-spear}, we have
  $\sign^\eta_\alpha \qf{a}_\s = \sign^\eta_\beta \qf{a}_\s$. The result
  follows by taking the supremum over all such elements $a$.
\end{proof}

\begin{rem}
For central simple algebras with involution over a field, we established the 
existence of reference forms in \cite{A-U-Kneb} and \cite{A-U-prime}. 
By \cite[Proposition~A.3]{A-U-prime} we may assume that these are 
nonsingular.
We currently do not know if 
the same behaviour can be expected for Azumaya algebras with involution 
$(A,\s)$ without hypotheses on $(A,\s)$ or $R$.
\end{rem}


%

\end{document}